\newcommand{\mmp}{\mathbb{P}}
\newcommand{\dod}{\overset{d}{\to}}
\newcommand{\tp}{\overset{P}{\to}}
\newcommand{\me}{\mathbb{E}}
\newcommand{\mr}{\mathbb{R}}
\newcommand{\mn}{\mathbb{N}}
\newcommand{\lin}{\underset{n\to\infty}{\lim}}
\newcommand{\lix}{\underset{x\to\infty}{\lim}}
\newcommand{\lit}{\underset{t\to\infty}{\lim}}
\newtheorem{thm}{Theorem}[section]
\newtheorem{lemma}[thm]{Lemma}
\newtheorem{assertion}[thm]{Proposition}
\theoremstyle{definition}
\theoremstyle{remark}
\newtheorem{rem}[thm]{Remark}
\begin{document}
\title{Weak convergence of finite-dimensional distributions of the number of empty boxes in the Bernoulli sieve
}
%
\author{Alexander Iksanov\footnote{Faculty of Cybernetics, Taras
Shevchenko National University of Kyiv, 64/13 Volodymyrska, str.,
Kyiv-01601, Ukraine; e-mail: iksan@univ.kiev.ua}, \ Alexander
Marynych\footnote{Faculty of Cybernetics, Taras Shevchenko
National University of Kyiv, 64/13 Volodymyrska, str., Kyiv-01601,
Ukraine; marynych@unicyb.kiev.ua}, Vladimir
Vatutin\footnote{Steklov Mathematical Institute, 8, Gubkin str.,
119991, Moscow, Russia; e-mail: vatutin@mi.ras.ru}, }
\maketitle
\begin{abstract}
\noindent The Bernoulli sieve is a random allocation scheme
obtained by placing independent points with the uniform $[0,1]$
law into the intervals made up by successive positions of a
multiplicative random walk with factors taking values in the
interval $(0,1)$. Assuming that the number of points is equal to
$n$ we investigate the weak convergence, as $n\to\infty$, of
finite-dimensional distributions of the number of empty intervals
within the occupancy range. A new argument enables us to relax the
constraints imposed in previous papers on the distribution of the
factor of the multiplicative random walk.
\end{abstract}
\noindent Key words: Bernoulli sieve, Karlin's occupancy scheme in
random environment, Poissonization, weak convergence of
finite-dimensional distributions

\section{Introduction and main results}

Let $T:=\big(T_k\big)_{k\in\mn_0}$ be a multiplicative random walk
defined by
$$T_0:=1, \ \ T_k:=\prod_{i=1}^k W_i, \ \ k\in\mn:=\mn_0\backslash
\{0\},$$ where $\mn_0:=\{0,1,2,\ldots\}$,
$\big(W_k\big)_{k\in\mn}$ are independent copies of a random
variable $W$ taking values in the open interval $(0,1)$. Also, let
$\big(U_k\big)_{k\in\mn}$ be independent random variables which
are independent of $T$ and have the uniform $[0,1]$ law. A random
allocation scheme in which 'balls'\, $U_1$, $U_2$ etc. are
allocated over an infinite array of 'boxes'\, $(T_k, T_{k-1}]$,
$k\in\mn$, is called the {\it Bernoulli sieve}. The study of this
allocation scheme was initiated in~\cite{Gne}. Since then
 a number of papers~\cite{slow, GneIksMar, GIM2,
GINR, GIR, Iks, Iks2} has appeared which analyze some asymptotic properties of
the Bernoulli sieve.

Since a particular ball falls into the box $(T_k, T_{k-1}]$ with
a random probability
\begin{equation*} P_k:=T_{k-1}-T_k=W_1W_2\cdots
W_{k-1}(1-W_k),
\end{equation*}
the Bernoulli sieve is also the classical Karlin's allocation
scheme \cite{GnePitHan, Karlin} with the {\it random frequencies}
$(P_k)_{k\in\mn}$ (or in the random environment $\big(P_k\big)$ or
$\big(W_k\big)$). In this setting it is assumed that, given the
environment $\big(P_k\big)$, some {\it abstract} balls are
allocated over an infinite collection of {\it abstract} boxes
$1,2,\ldots$ independently with probability $P_j$ of hitting box
$j$. In the sequel, we say that the box $(T_k, T_{k-1}]$ has index
$k$.

Recall that some infinite random allocation schemes in
\textit{nonrandom environment} were also investigated in
\cite{Mikh, Mira1, Mira2}. It should be emphasized that infinite
allocation schemes radically differ  from the classical allocation
scheme with finitely many positive frequencies (see monograph \cite{Sevast} for more detail).

Assuming that the number of balls to be allocated equals $n$ (in
other words, using a sample of size $n$ from the uniform
distribution on $[0,1]$), denote by $K_n$ the number of occupied
boxes and by $M_n$ the index of the last occupied box. Set
$L_n:=M_n-K_n$ and note that $L_n$ equals the number of empty
boxes with indices not exceeding $M_n$. The articles mentioned in
the first paragraph give a fairly complete account of {\it
one-dimensional convergence} of $K_n$, $M_n$ and $L_n$. The
present paper contains the first results concerning weak
convergence of {\it finite-dimensional} distributions of elements
of the collection $\big(L_n\big)$.

Before formulating the main results of the paper we recall an
assertion given in Theorem 1.1 \cite{Iks2}.
\begin{assertion}\label{old}
If $\me |\log W|=\infty$ and
\begin{equation*}
\lin {\me(1-W)^n\over \me W^n}=c\in (0,\infty),
\end{equation*}
then
\begin{equation}\label{3old}
L_n\ \dod \ L, \ \ n\to\infty,
\end{equation}
where $L$ is a random variable with a geometric law
$$\mmp\{L=k\}={c\over c+1}\bigg({1\over c+1}\bigg)^k, \ \ k\in\mn_0.$$ In particular, relation \eqref{3old} holds if
\begin{equation}\label{5}
\lix \,{\mmp\{|\log W|> x\}\over\mmp\{|\log (1-W)|> x\}}=c.
\end{equation}
\end{assertion}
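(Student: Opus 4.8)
The plan is to prove $L_n\dod L$ by the method of moments, identifying the limits of the factorial moments $\me[(L_n)_r]$ with those of the geometric law in the statement; the key preliminary device is a Poissonization of the number of balls. Replacing the fixed sample size $n$ by an independent $\mathrm{Poisson}(n)$ number of balls does not alter the weak limit of $L_n$, by the standard de-Poissonization argument used throughout this circle of problems. After this step, conditionally on the environment $(W_k)_{k\in\mn}$ — equivalently on the frequencies $(P_k)$, or on the associated random walk $S_k:=-\log T_k=\sum_{i=1}^{k}(-\log W_i)$ with increments $-\log W_k$ — the numbers of balls in the boxes become independent, box $i$ receiving a $\mathrm{Poisson}(nP_i)$ number. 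Now box $i$ is empty and strictly precedes the last occupied box precisely when it gets no ball while some box of larger index does; these two events involve disjoint families of box-counts, hence are conditionally independent, which yields the exact formula
$$\me\big[(L_n)_r\mid (W_k)\big]\ =\ r!\sum_{1\le i_1<\cdots<i_r}\big(1-e^{-nT_{i_r}}\big)\prod_{\ell=1}^{r}e^{-nP_{i_\ell}},\qquad r\in\mn,$$
with $(x)_r:=x(x-1)\cdots(x-r+1)$ (and a similar closed form for $\me[z^{L_n}\mid(W_k)]$).

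Next I would set up a Tauberian bridge. Averaging the factor $e^{-nP_i}=e^{-nT_{i-1}(1-W_i)}$ over the fresh variable $W_i$ produces the Laplace transform $\phi(\lambda):=\me\, e^{-\lambda(1-W)}$ evaluated at $\lambda=nT_{i-1}=n\,e^{-S_{i-1}}$. The hypothesis $\lim_{n\to\infty}\me(1-W)^n/\me W^n=c$ is itself a statement about ratios of Laplace transforms, and, in the presence of $\me|\log W|=\infty$, it transfers (by a Tauberian/Abelian argument) into the tail asymptotics
$$\phi(e^{x})\ \sim\ \frac{1}{c}\,\mmp\{-\log W>x\},\qquad x\to\infty.$$
This is precisely what converts the exponential expressions in the moment formula into the tail $\overline G(x):=\mmp\{-\log W>x\}$ of the increments of $S$, and it is the only place the specific hypothesis of the proposition enters.

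Substituting $S_{i-1}=\log n-x$ and integrating over the environment, the first factorial moment reduces, up to a vanishing remainder, to $c^{-1}\,\me\sum_{j\ge 0}\mathbf 1\{S_j\le\log n\}\,\overline G(\log n-S_j)$; the elementary renewal identity
$$\me\sum_{j\ge 0}\mathbf 1\{S_j\le t\}\,\overline G(t-S_j)\ =\ \me\,\#\{j\ge 0\colon S_j\le t<S_{j+1}\}\ =\ 1\qquad(t\ge 0),$$
valid for every law of $W$, then gives $\me L_n\to c^{-1}$. Applying the same identity iteratively to the inner factors $e^{-nP_{i_\ell}}$, while the factor $1-e^{-nT_{i_r}}$ confines the largest index to the very end of the occupancy range, should give $\me[(L_n)_r]\to r!\,c^{-r}$ for every $r\in\mn$. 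These are exactly the factorial moments of the geometric distribution in the statement; since $\sum_{r\ge 0}(s/c)^{r}$ has a positive radius of convergence, that law is determined by its moments, so $L_n\dod L$. I expect the real work to be in the iteration for $r\ge 2$: one must show that the bulk and the boundary contributions to the $r$-fold sum are negligible, that the dominant term comes from $r$ boxes all sitting just inside the occupancy range, and that it factorises into the clean constant $c^{-r}$ — and this has to be done under the single, weak hypothesis, with no regular variation of $-\log W$ itself available, so the renewal identity must be exploited with uniform control of the error terms.

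Finally, for the last assertion: condition \eqref{5} says that the tails of $-\log W$ and of $-\log(1-W)$ are asymptotically proportional with ratio $c$; since $\me|\log W|=\infty$ forces both tails to be non-integrable, a Tauberian theorem turns this tail proportionality into the Laplace-transform ratio $\me(1-W)^n/\me W^n\to c$, so the main hypothesis of Proposition~\ref{old} is in force.
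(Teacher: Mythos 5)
Your Poissonization/de-Poissonization step and the conditional factorial-moment identity are sound (and the de-Poissonization is exactly Lemma \ref{depois} of this paper), but the proof collapses at the step you yourself single out as ``the only place the specific hypothesis enters'': the claimed Tauberian bridge $\me e^{-e^{x}(1-W)}\sim c^{-1}\mmp\{|\log W|>x\}$, $x\to\infty$, is not a consequence of the hypotheses and is in fact false in general. The hypothesis $\me(1-W)^n/\me W^n\to c$ is an integrated (transform-ratio) condition; passing from it to a \emph{pointwise} tail asymptotic is a Tauberian step that requires regularity (regular variation or the like) of the tail, and the whole point of stating Proposition \ref{old} with the moment-ratio condition rather than \eqref{5} is that no such regularity is assumed. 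Concretely, take $W$ with $\mmp\{W=e^{-2^{k}}\}=\mmp\{W=1-e^{-2^{k}}\}=2^{-k-1}$, $k\in\mn$. Then $W\od 1-W$, so $\me(1-W)^n/\me W^n\equiv 1$ (hypothesis holds with $c=1$, and even \eqref{5} holds), and $\me|\log W|=\infty$; yet $\me e^{-e^{x}(1-W)}/\mmp\{|\log W|>x\}$ equals approximately $1+e^{-1}$ along $x=2^{m}$ and approximately $1$ along $x=3\cdot 2^{m-1}$, so it has no limit. (Note that even \eqref{5} cannot save you: it controls only the ratio of the two tails, not the regularity of either one, and the smoothing kernel $e^{-e^{x-y}}$ has a window of width $O(1)$ which a lacunary tail feels.) What is true, and what the actual proof of Theorem 1.1 of \cite{Iks2} establishes, is only an \emph{integrated} statement of the type $\int_{[0,\,t]}\mmp\{|\log(1-W)|>t-y\}\,{\rm d}U(y)\to c^{-1}$; extracting this from the moment-ratio condition is precisely the hard renewal-theoretic content, and your sketch replaces it by a pointwise equivalence that does not hold. (Also note that the present paper does not reprove the proposition at all — it cites \cite{Iks2} — and its own related machinery, Lemmas \ref{red}--\ref{depois} together with the key renewal theorem applied to directly Riemann integrable functions, deliberately keeps every appearance of the hypothesis in such integrated form.)

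Two further gaps, less fatal but real: (i) the method of moments requires you to \emph{prove} convergence of all factorial moments $\me[(L_n)_r]$, which is strictly more than the asserted distributional convergence; the $r\ge 2$ iteration ``with uniform control of the error terms'' is exactly where the work lies, and you offer no bound showing the bulk of the $r$-fold sum is negligible, nor any uniform integrability; (ii) in your formula for $\me[(L_n)_r\mid(W_k)]$ the factors $1-e^{-nT_{i_r}}$ and $e^{-nP_{i_r}}$ both involve $W_{i_r}$, so averaging ``the fresh variable $W_i$'' does not yield the clean factor $\phi(nT_{i-1})$ for the last index; the correct environment average there is the joint functional $\me\big[e^{-\lambda(1-W)}\big(1-e^{-\lambda W}\big)\big]$ at $\lambda=nT_{i_r-1}$. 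Point (ii) is repairable and the renewal identity $\me\sum_{j\ge0}1_{\{S_j\le t\}}\mmp\{|\log W|>t-S_j\}=1$ you invoke is correct, but the quantity the model actually produces involves the tail of $|\log(1-W)|$, and bridging the two tails under the stated hypothesis is the unproved core of the argument.
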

There are no reasons to expect that the conditions $\me |\log
W|=\infty$ and \eqref{5} alone are sufficient for weak convergence
of some {\it finite-dimensional} distributions related to
$\big(L_n\big)$. Nevertheless,  a result of this sort is given in
Theorem \ref{main} below under an additional assumption imposed on
the {\it decay rate} to zero of the numerator in \eqref{5}.

Let $N^{(\alpha,\,c)}_\infty:=\sum_k \varepsilon_{(t_k,\,j_k)}$ be
a Poisson random measure on $[0,\infty)\times (0,\infty]$ with
mean measure $\mathbb{LEB}\times \nu_{\alpha,\,c}$, where
$\mathbb{LEB}$ is the Lebesgue measure on $[0,\infty)$, and
$\nu_{\alpha,\,c}$ is a measure on $(0,\infty]$ defined by
$$\nu_{\alpha,\,c}\big((x,\infty]\big)=c^{-1}x^{-\alpha}, \ \
x>0.$$ Further, let $\big(X_\alpha(t)\big)_{t\geq 0}$ be an
$\alpha$-stable subordinator which is independent of
$N^{(\alpha,\,c)}_\infty$ and has the Laplace transform
$$\me
\exp(-zX_\alpha(t))=\exp(-\Gamma(1-\alpha)tz^\alpha),\quad z\geq
0,$$ where $\Gamma(\cdot)$ is the gamma function. Denote by
$\big(X_\alpha^\leftarrow(s)\big)_{s\geq 0}$  an inverse
$\alpha$-stable subordinator defined by
$$
X_\alpha^\leftarrow(s):=\inf\{t\geq 0: X_\alpha(t)>s\}, \ \ s\geq
0.
$$

We stipulate hereafter that $\ell$, $\widehat{\ell}$ and
$\ell^\ast$ denote functions  slowly varying at infinity. Besides,
we write $Z_t(u) \overset{{\rm f.d.}}{\Rightarrow} \ Z(u)$,
$t\to\infty$ to denote weak convergence of finite-dimensional
distributions meaning that for any $n\in\mn$ and any selection
$0<u_1<u_2<\ldots<u_n<\infty$
$$\big(Z_t(u_1),\ldots, Z_t(u_n)\big) \ \dod \ \big(Z(u_1),\ldots,
Z(u_n)\big), \ \ t\to\infty.$$
\begin{thm}\label{main}
If there exist $\alpha\in (0,1)$, $c\in (0,\infty)$ and a
function $\ell$ such that
\begin{equation}\label{0030}
\mmp\{|\log W|> x\} \ \ \sim \ \ c\,\mmp\{|\log (1-W)|>x\} \ \sim
\ x^{-\alpha}\ell(x), \ \ x\to\infty,
\end{equation}
then
\begin{equation*}
L_{[e^{ut}]} \  \overset{{\rm f.d.}}{\Rightarrow} \ \sum_k
1_{\{X_\alpha(t_k)\leq u< X_\alpha(t_k)+j_k\}}=:R_{\alpha,\,c}(u),
\ \ t\to\infty.
\end{equation*}
Furthermore, with $u>0$ fixed, the distribution of
$R_{\alpha,\,c}(u)$ is geometric with the success probability
$c(c+1)^{-1}$.
\end{thm}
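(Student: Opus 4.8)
The plan is to prove the convergence of finite-dimensional distributions by Poissonization combined with a functional limit theorem for the renewal structure underlying the sieve, and then to identify the one-dimensional marginal of $R_{\alpha,\,c}(u)$ by conditioning on the subordinator. First I would pass to logarithmic coordinates: put $\xi_k:=-\log W_k$, $\eta_k:=-\log(1-W_k)$ and $S_k:=\xi_1+\dots+\xi_k=-\log T_k$, so that the pairs $(\xi_k,\eta_k)$ are i.i.d.\ and the box with index $k$ is hit by a given ball with probability $P_k=e^{-S_{k-1}-\eta_k}$. Next I would Poissonize, replacing the deterministic number $n=[e^{ut}]$ of balls by an independent Poisson$(n)$ number; conditionally on the environment the boxes are then filled by independent Poisson counts, box $k$ receiving a Poisson$(nP_k)$ number, which makes the event ``box $k$ is empty and lies within the occupancy range'' analytically tractable. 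Under \eqref{0030} the increment $|\log W|$ is regularly varying of index $-\alpha$ with $\alpha\in(0,1)$, so for a suitable $m=m(t)\to\infty$ (chosen so that $m\,\mmp\{|\log W|>t\}\to1$, i.e.\ $m\sim t^{\alpha}/\ell(t)$) the rescaled walk $\big(S_{[ms]}/t\big)_{s\ge0}$ converges in the Skorokhod space to the $\alpha$-stable subordinator $\big(X_\alpha(s)\big)_{s\ge0}$, overshooting the level $ut$ by one big jump after an undershoot of order $t$. The heuristic for the limit is that, with $n\approx e^{ut}$ balls, a box with index $k$ contributes to $L_n$ essentially iff $S_{k-1}\le ut<S_{k-1}+\eta_k$, i.e.\ the $\eta$-increment at step $k$ exceeds the remaining distance $ut-S_{k-1}$ to the level; and since a large $\xi_k$ forces $\eta_k\approx0$ while a large $\eta_k$ forces $\xi_k\approx e^{-\eta_k}\approx0$, such indices occur precisely where the limiting subordinator is locally constant.

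The core step is the joint weak convergence, of the rescaled walk in the Skorokhod space together with the point process of its $\eta$-marks,
$$\Big(\big(S_{[ms]}/t\big)_{s\ge0},\ \textstyle\sum_k\varepsilon_{(k/m,\,\eta_k/t)}\Big)\ \Longrightarrow\ \big(X_\alpha,\ N^{(\alpha,\,c)}_\infty\big),\qquad t\to\infty,$$
where the two limiting objects are \emph{independent}: the large $\xi_k$ and the large $\eta_k$ occur at disjoint sets of indices (they force $W_k$ near $0$, resp.\ near $1$), so in the limit the subordinator, built from the big jumps of the walk, and the $\eta$-point process decouple, while the tail balance $\mmp\{\eta>x\}\sim c^{-1}\mmp\{\xi>x\}$ from \eqref{0030} yields exactly the mean measure $\mathbb{LEB}\times\nu_{\alpha,\,c}$ and the normalization $\me\exp(-zX_\alpha(t))=\exp(-\Gamma(1-\alpha)tz^\alpha)$. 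With this in hand I would show that $L_{[e^{ut}]}$ differs from $\sum_{k\ge1}1_{\{S_{k-1}\le ut<S_{k-1}+\eta_k\}}$ by a quantity converging to $0$ in probability, and apply the continuous-mapping theorem to the functional $\big(x(\cdot),\sum_k\varepsilon_{(t_k,j_k)}\big)\mapsto\sum_k1_{\{x(t_k-)\le u<x(t_k-)+j_k\}}$, which is a.s.\ continuous at $\big(X_\alpha,N^{(\alpha,\,c)}_\infty\big)$ because a.s.\ no $\eta$-point lands on a boundary $\{X_\alpha(t_k-)=u\}$ or $\{X_\alpha(t_k-)+j_k=u\}$. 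Since one and the same realization of $\big(X_\alpha,N^{(\alpha,\,c)}_\infty\big)$ governs every level simultaneously, this gives $\big(L_{[e^{u_1t}]},\dots,L_{[e^{u_nt}]}\big)\dod\big(R_{\alpha,\,c}(u_1),\dots,R_{\alpha,\,c}(u_n)\big)$, and the passage back to a deterministic sample size is by a de-Poissonization argument of the kind used for the one-dimensional limit theorems, resting on $\Pi(e^{ut})/e^{ut}\to1$ a.s., which keeps the relevant level at $ut+o(1)$.

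For the marginal law, fix $u>0$ and condition on $X_\alpha$. Given $X_\alpha$, the variable $R_{\alpha,\,c}(u)$ counts the points of a Poisson process falling in the deterministic set $\{(t,j):X_\alpha(t)\le u<X_\alpha(t)+j\}$, hence is Poisson with random mean $c^{-1}A(u)$, where $A(u):=\int_0^\infty1_{\{X_\alpha(t)\le u\}}\,(u-X_\alpha(t))^{-\alpha}\,dt$. Writing $g_s(v):=\me\big[(v-X_\alpha(s))^{-\alpha}1_{\{X_\alpha(s)\le v\}}\big]$ and using $\int_0^\infty e^{-zy}y^{-\alpha}\,dy=\Gamma(1-\alpha)z^{\alpha-1}$ together with the Laplace transform of $X_\alpha(s)$, one finds that the Laplace transform in $v$ of $\int_0^\infty g_s(v)\,ds$ equals $z^{-1}$, i.e.\ $\int_0^\infty g_s(v)\,ds\equiv1$; iterating the Markov property of $X_\alpha$ then gives $\me A(u)^n=n!$ for every $n$, so that $A(u)$ is exponential with unit mean. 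Consequently $R_{\alpha,\,c}(u)$ is a mixed Poisson variable whose mixing variable $c^{-1}A(u)$ is exponential with mean $c^{-1}$, which is precisely the geometric law with success probability $c(c+1)^{-1}$. As a consistency check, \eqref{0030} entails both $\me|\log W|=\infty$ and \eqref{5}, so Proposition~\ref{old} already yields $L_{[e^{ut}]}\dod L$ for each fixed $u$, re-identifying the law of $R_{\alpha,\,c}(u)$.

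I expect the main obstacle to be the reduction of ``box $k$ is empty and within the occupancy range'' to the clean event $\{S_{k-1}\le ut<S_{k-1}+\eta_k\}$: one must show that the transition region --- indices $k$ for which $e^{ut-S_{k-1}-\eta_k}$ is bounded away from $0$ and $\infty$, or for which $S_{k-1}$ lies in the $O_P(1)$ fluctuation band of $-\log\min_iU_i$ about $ut$ --- contributes only negligibly many boxes. I would control this by a first-moment estimate showing that the expected number of empty in-range boxes with $S_{k-1}\in[(u-\delta)t,ut]$ is $O(\delta^{1-\alpha})$ uniformly in $t$, so that only genuinely large $\eta$-jumps survive in the limit, together with the known sparseness of renewal points of a regularly varying walk near a high level. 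The remaining technical points --- the de-Poissonization and the a.s.\ continuity of the counting functional --- are standard, and the multi-level statement adds nothing conceptual once the single-level reduction and the joint convergence of the renewal structure are available.
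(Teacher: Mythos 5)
Your proposal is correct and follows essentially the same route as the paper: Poissonization and reduction to $\sum_{k\geq 0}1_{\{S_k\leq ut<S_k+\eta_{k+1}\}}$ (the paper's Lemmas \ref{red} and \ref{depois}), then the joint convergence of the rescaled walk and the marked point process of the $\eta$'s with \emph{independent} limits --- which is exactly the paper's key relation \eqref{joint convergence}, justified there by the fact that the bivariate tail measure of $(|\log W|,|\log(1-W)|)$ concentrates on the axes --- followed by the Mikosch--Resnick continuous-mapping transfer and the identification of the marginal as mixed Poisson with parameter $c^{-1}\int_{[0,u]}(u-s)^{-\alpha}{\rm d}X_\alpha^\leftarrow(s)$. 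The only noteworthy deviation is that you establish the standard exponential law of this integral by a self-contained moment computation ($\me A(u)^n=n!$ via the Markov property) where the paper cites \cite{IMM}, and your reduction and de-Poissonization steps are sketched at a level of detail below the paper's renewal-theoretic lemmas but with the correct ingredients.
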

\begin{rem}
The weak convergence of finite-dimensional distributions stated in
Theorem \ref{main} immediately implies the strict stationarity of the
process $\big(R_{\alpha,\,c}(e^t)\big)_{t\in\mr}$.
\end{rem}

Theorem \ref{main2} and Theorem \ref{main3} given below refine
Theorem 1.1 \cite{Iks} and Theorem 1.2 \cite{Iks2}, respectively,
which  deal with one-dimensional convergence only.
\begin{thm}\label{main2}
Suppose that there exist $0\leq \beta\leq \alpha<1$ $($
$\alpha+\beta>0$ $)$ and functions $\ell$ and $\widehat{\ell}$
such that
\begin{equation}\label{002}
\mmp\{|\log W|>x\} \ \sim \ x^{-\alpha}\ell(x) \ \ \text{and} \ \
\mmp\{|\log (1-W)|>x\} \ \sim \ x^{-\beta}\widehat{\ell}(x), \ \
x\to\infty.
\end{equation}
If $\alpha=\beta$, assume additionally that
$$\lix {\mmp\{|\log W|>x\}\over \mmp\{|\log (1-W)|>x\}}=0$$ and that there exists a nondecreasing function $u(x)$ satisfying
$$\lix {\mmp\{|\log
W|>x\}u(x)\over \mmp\{|\log(1-W)|>x\}} =1.$$ Then
\begin{equation}\label{0010}
{\mmp\{|\log W|> t\}\over \mmp\{|\log (1-W)|> t\}}L_{[e^{ut}]} \
\overset{{\rm f.d.}}{\Rightarrow} \
\int_{[0,\,u]}(u-s)^{-\beta}{\rm
d}X_\alpha^\leftarrow(s)=:W_{\alpha,\,\beta}(u), \ \ t\to\infty.
\end{equation}
\end{thm}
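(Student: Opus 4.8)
The plan is to reduce everything to a Poissonized version of the model and then to a functional limit theorem for the underlying random walk. Write $\xi_k:=|\log W_k|$ and $\eta_k:=|\log(1-W_k)|$, so that the logarithmic left endpoints of the boxes form the additive random walk $S_k:=\xi_1+\cdots+\xi_k=-\log T_k$, and a box with index $k$ has logarithmic width $\approx\eta_k$ and sits inside the window $(S_{k-1},S_k]$. With $n=[e^{ut}]$ balls, a box of index $k$ is empty precisely when none of the $n$ uniform points lands in the corresponding subinterval of $[0,1]$; after the change of variables $x=-\log y$, this is governed by the position $S_{k-1}$ relative to $\log n\approx ut$. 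So $L_{[e^{ut}]}$ counts indices $k$ with $S_{k-1}$ in roughly $[0,ut]$ that receive no balls. The first step is the standard de-Poissonization: replace the fixed sample size $n$ by an independent Poisson$(\lambda)$ number of balls, observe that conditionally on the environment the occupancy indicators become independent, and show (as in \cite{Iks,Iks2}) that the Poissonized and fixed-$n$ finite-dimensional distributions have the same weak limit.

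Next I would rescale time. Under the tail assumption \eqref{002} with $\alpha<1$, the walk $(S_k)$ is in the domain of attraction of an $\alpha$-stable subordinator: $t^{-1}$-normalized in the appropriate sense, $S_{[t\,\cdot\,]}$ (after dividing by a suitable slowly varying norming) converges to $X_\alpha$, and the associated counting process of indices $\{k:S_{k-1}\le ut\}$ converges, after scaling, to the inverse subordinator $X_\alpha^\leftarrow$. This is the reason the limit in \eqref{0010} is built from $X_\alpha^\leftarrow$. The empty-box count along this range is, to leading order, a sum over the $k$'s in the window of the conditional emptiness probabilities $\exp(-e^{ut}P_k)$, where $P_k=T_{k-1}(1-W_k)=e^{-S_{k-1}}(1-e^{-\xi_k})$ has logarithm $-S_{k-1}-\eta_k$ up to a negligible correction. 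A box at logarithmic location $S_{k-1}=s$ is ``likely empty'' when $e^{ut}P_k\lesssim 1$, i.e.\ when $\eta_k\gtrsim ut-s$; since $\mmp\{\eta_k>x\}\sim x^{-\beta}\widehat\ell(x)$, the expected number of empty boxes with $S_{k-1}$ near $s$ contributes a factor $(ut-s)^{-\beta}\widehat\ell(ut-s)$. Summing (integrating) this weight against the increments of the index-counting process is exactly what produces $\int_{[0,u]}(u-s)^{-\beta}\,{\rm d}X_\alpha^\leftarrow(s)$ after the normalization by $\mmp\{|\log W|>t\}/\mmp\{|\log(1-W)|>t\}$ in front of $L_{[e^{ut}]}$; the role of the extra hypotheses in the case $\alpha=\beta$ (the ratio tending to $0$ and the existence of the monotone $u(x)$) is to make this weight function genuinely regularly varying and to control the boundary layer near $s=u$ where $(u-s)^{-\beta}$ blows up.

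Concretely, the key steps in order are: (i) Poissonize and write the Poissonized empty-box count over any fixed window $(0,u]$ as a sum of conditionally independent Bernoulli variables with success probabilities $\exp(-\lambda P_k)$; (ii) prove the joint convergence, for finitely many levels $u_1<\cdots<u_n$, of the pair (rescaled partial-sum process of the $\xi$'s, hence its inverse / index-counting process) to $(X_\alpha, X_\alpha^\leftarrow)$, using the classical stable functional limit theorem together with a.s.\ continuity/monotonicity of the inverse; (iii) on the event that this convergence is realized (Skorokhod coupling), show that the normalized sum $\frac{\mmp\{|\log W|>t\}}{\mmp\{|\log(1-W)|>t\}}\sum_{k:\,S_{k-1}\le u t}\exp(-e^{ut}P_k)$ converges to the stochastic integral $\int_{[0,u]}(u-s)^{-\beta}\,{\rm d}X_\alpha^\leftarrow(s)$, by splitting the sum into a bulk part (where a law-of-large-numbers / Riemann-sum argument applies, conditioning on $(\xi_k)$ and averaging over the $\eta_k$) and a small-$(u-s)$ part that is controlled by a moment bound showing its contribution is uniformly small; (iv) upgrade from the sum of emptiness probabilities to the actual count $L_{[e^{ut}]}$ by a second-moment estimate showing the conditional variance is of smaller order, and transfer back from the Poissonized to the fixed-$n$ model; (v) handle the lower truncation (boxes with very small index, i.e.\ $S_{k-1}$ close to $0$, and boxes beyond $M_n$) by checking they contribute negligibly. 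I expect the main obstacle to be step (iii), and within it the boundary analysis near $s=u$: because $(u-s)^{-\beta}$ is not integrable-looking in a naive sense and $\widehat\ell$ may be unbounded, one must prove a uniform tail estimate — essentially that $\sum_{k:\,u t - \epsilon t < S_{k-1}\le u t}\exp(-e^{ut}P_k)$, suitably normalized, has expectation $O(\epsilon^{1-\beta})$ uniformly in $t$ — and combine it with tightness of $X_\alpha^\leftarrow$ near its right endpoint to pass to the limit; the $\alpha=\beta$ regime is delicate precisely here, which is why the extra monotone-majorant hypothesis is imposed.
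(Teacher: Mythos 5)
Your proposal is correct in outline and follows the same architecture as the paper: Poissonize/de-Poissonize, reduce to a functional of the environment alone, identify the limit of the ``expected emptiness'' shot-noise sum via convergence to the inverse $\alpha$-stable subordinator, and kill the fluctuations by a second-moment bound. The differences are in how the two hard steps are discharged. The paper's reduction is the chain Lemma \ref{depois} $\to$ Lemma \ref{red}, which replaces the conditional emptiness probabilities $\exp(-e^{ut-S_k}(1-W_{k+1}))$ by the indicators $1_{\{S_k\le ut<S_k+\eta_{k+1}\}}$ once and for all (your sketch keeps the exponentials and would have to redo this comparison inside step (iii)); and the central convergence, namely
$\frac{1-F(t)}{1-G(t)}\sum_{k\ge0}(1-G(ut-S_k))1_{\{S_k\le ut\}}\overset{{\rm f.d.}}{\Rightarrow}\int_{[0,u]}(u-s)^{-\beta}{\rm d}X_\alpha^{\leftarrow}(s)$,
is not proved in the paper but quoted from Theorem 2.10 of \cite{IMM} (a renewal shot-noise limit theorem), after which only the variance estimate $\int_{[0,t]}G(y)(1-G(y)){\rm d}U(y)\sim{\rm const}\,(1-G(t))/(1-F(t))$ plus Markov's inequality is needed; your plan to prove this convergence from scratch by Skorokhod coupling and a bulk/boundary Riemann-sum analysis is viable but is exactly the substantial work outsourced to \cite{IMM}, and you underestimate it somewhat. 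One interpretive correction: in the case $\alpha=\beta$ the hypothesis $\mmp\{|\log W|>x\}/\mmp\{|\log(1-W)|>x\}\to0$ is not boundary-layer control near $s=u$; it is what makes the normalized fluctuation term (your step (iv)) vanish, since the conditional variance is of the same order as the conditional mean — indeed when this ratio tends to a positive constant the limit is entirely different (the geometric-type limit of Theorem \ref{main}) — while the existence of the nondecreasing majorant $u(x)$ is a technical hypothesis of the cited shot-noise theorem.
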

\begin{thm}\label{main3}
Suppose that there exist $\beta\in [0,1)$ and a function
$\widehat{\ell}$ such that
\begin{equation}\label{domain2}
\mmp\{|\log (1-W)|>x\} \ \sim \ x^{-\beta}\widehat{\ell}(x), \ \
x\to\infty.
\end{equation}

\noindent {\rm (a)} If $\sigma^2={\rm Var}\,(\log W)<\infty$ then
\begin{equation}\label{0018}
{L_{[e^{ut}]}-\mu^{-1}\int_0^{ut} \mmp\{|\log (1-W)|>y\}{\rm d}y
\over \sqrt{\mu^{-1}\int_0^t \mmp\{|\log (1-W)|>y\}{\rm d}y}} \
\overset{{\rm f.d.}}{\Rightarrow} \ V(u), \ \ t\to\infty,
\end{equation}
where $\mu:=\me |\log W|<\infty$, and $\big(V(s)\big)_{s\geq 0}$
is a centered Gaussian process with
$$\me V(t)V(s)=t^{1-\beta}-(t-s)^{1-\beta}, \ \ 0\leq s\leq t.$$

\noindent {\rm (b)} Suppose that $\sigma^2=\infty$ and there exists a
function $\ell$ such that
\begin{equation}\label{domain0}
\int_{[0,\,x]} y^2 \mmp\{|\log W|\in {\rm d}y\} \ \sim \ \ell(x),
\ \ x\to\infty.
\end{equation}
Let $c(x)=x^{1/2}\ell^\ast(x)$ be a positive function satisfying
$\lix \,x\ell(c(x))/c^2(x)~=~1.$

\noindent {\rm (b1)} If
\begin{equation}\label{555}
\lix \mmp\{|\log (1-W)|>x\}\big(\ell^\ast(x)\big)^2=0
\end{equation}
then relation \eqref{0018} holds true.

\noindent {\rm (b2)} If $\beta=0$ and, in addition,
$$\lix \mmp\{|\log (1-W)|>x\}\big(\ell^\ast(x)\big)^2=\infty$$ then
$${L_{[e^{ut}]}-\mu^{-1}\int_0^{ut} \mmp\{|\log (1-W)|>y\}{\rm d}y\over \mu^{-3/2}c(t)\mmp\{|\log (1-W)|>t\}} \ \overset{{\rm f.d.}}{\Rightarrow} \
\int_{[0,\,u]}(u-s)^{-\beta}{\rm d}Z_2(s)=:W_{2,\,\beta}(u), \ \
t\to\infty,$$ where $\big(Z_2(s)\big)_{s\geq 0}$ is a Brownian
motion.
\newline {\rm (c)} Suppose that there exist $\alpha\in (1,2)$ and a function $\ell$ such
that
\begin{equation}\label{domain1}
\mmp\{|\log W|>x\} \ \sim \ x^{-\alpha}\ell (x), \ \ x\to \infty.
\end{equation}
Let $c(x)=x^{1/\alpha}\ell^\ast(x)$ be a positive function
satisfying $\lix \,x\ell(c(x))/c^\alpha(x)~=~1.$

\noindent {\rm (c1)} If
\begin{equation}\label{555555}
\lix \mmp\{|\log
(1-W)|>x\}x^{2/\alpha-1}\big(\ell^\ast(x)\big)^2=0
\end{equation}
then relation \eqref{0018} holds true.

\noindent {\rm (c2)} Suppose that $\beta\in [0, 2/\alpha-1]$. If
$\beta=2/\alpha-1$, assume additionally that
\begin{equation}\label{0011}
\lix
\mmp\{|\log(1-W)>x|\}x^{2/\alpha-1}\big(\ell^\ast(x)\big)^2=\infty.
\end{equation}
Then
$${L_{[e^{ut}]}-\mu^{-1}\int_0^{ut} \mmp\{|\log (1-W)|>y\}{\rm d}y\over \mu^{-1-1/\alpha}c(t)\mmp\{|\log (1-W)|>t\}} \ \overset{{\rm f.d.}}{\Rightarrow}
\ \int_{[0,\,u]}(u-s)^{-\beta}{\rm
d}Z_\alpha(s)=:W_{\alpha,\,\beta}(u), \ \ t\to\infty,$$ where
$\big(Z_\alpha(s)\big)_{s\geq 0}$ is an $\alpha$-stable L\'{e}vy
process with
\begin{equation}\label{st1}
\mathbb{E}\exp\{izZ_\alpha(1)\}=\exp\{-|z|^\alpha
\Gamma(1-\alpha)(\cos(\pi\alpha/2)+{\rm i}\sin(\pi\alpha/2)\, {\rm
sign}(z))\}, \ z\in\mr.
\end{equation}
\end{thm}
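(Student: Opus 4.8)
The plan is to treat all the regimes of Theorem~\ref{main3} within a single Poissonized framework and transfer back at the end. First I would Poissonize the number of balls: replace the deterministic sample size by an independent Poisson random variable with parameter $e^{ut}$. Conditionally on the environment $\big(P_k\big)$, the box occupancy counts become independent Poisson variables, so the number of empty boxes with index in a given range is a sum of independent Bernoulli indicators $1_{\{\Pi_k=0\}}$, whose parameters are $e^{-e^{ut}P_k}$. The key structural input is the standard representation of the random walk $-\log T_k=\sum_{i\le k}|\log W_i|$ as a renewal process, together with the companion process built from the jumps $|\log(1-W_k)|$; the box of index $k$ is (asymptotically) empty iff $-\log T_{k-1}$ lies above $ut$ but the ``correction'' $|\log(1-W_k)|$ is not too small, which is exactly what the tail assumptions~\eqref{002}, \eqref{domain0}, \eqref{domain1} control.

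Next I would establish the functional convergence of the underlying renewal-type processes. In case~(a) and the sub-cases (b1), (c1), the walk $\sum_{i\le \lfloor \cdot\rfloor}|\log W_i|$ satisfies a strong LLN with a CLT-type correction, and the fluctuations of $L_{[e^{ut}]}$ around its mean $\mu^{-1}\int_0^{ut}\mmp\{|\log(1-W)|>y\}\,{\rm d}y$ come \emph{solely} from the fluctuations of the renewal counting function $\sum_k 1_{\{-\log T_k\le ut\}}$; a Skorokhod-space argument (invariance principle for the renewal process, either Brownian when $\sigma^2<\infty$ or $\alpha$-stable-driven when $\sigma^2=\infty$) combined with the continuous-mapping theorem through an integral functional of the form $s\mapsto \int_{[0,us]}(u-\cdot)^{-\beta}\,{\rm d}(\cdot)$ yields the limit process $V$ or $W_{\alpha,\beta}$. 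In cases (b2) and (c2) the renewal process, after centering and scaling by $c(t)$, converges to a Brownian motion resp.\ an $\alpha$-stable Lévy process $Z_\alpha$ with characteristic exponent~\eqref{st1}, and the weight $(u-s)^{-\beta}$ enters through the remaining ``slowly varying'' part of the $|\log(1-W)|$-tail via~\eqref{domain2}; I would identify the limit of the stochastic integral $\int_{[0,u]}(u-s)^{-\beta}\,{\rm d}Z_\alpha(s)$ by checking convergence of the finite-dimensional distributions of the normalized partial-sum process jointly with the deterministic weight, using the uniform integrability granted by the moment condition~\eqref{domain0} resp.\ the tail balance~\eqref{0011}.

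The final step is de-Poissonization: one shows that replacing the Poisson sample size $\mathrm{Poi}(e^{ut})$ by the deterministic $[e^{ut}]$ does not affect the limit, because $L_n$ is, up to negligible boundary terms, a locally smooth function of $n$ on the relevant scale; here I would use a standard coupling/monotonicity estimate (the occupancy range shifts by $O_P(1)$ boxes, each contributing $O(1)$ to $L_n$, which is of smaller order than the normalizing sequences in every regime) and transfer the f.d.d.\ convergence through a Lemma of the type ``if $X_{N(t)}\Rightarrow X$ with $N(t)/e^{ut}\to 1$ a.s.\ and $X_n$ is a.s.\ locally Hölder on the scale, then $X_{[e^{ut}]}\Rightarrow X$.'' The main obstacle, I expect, is \textbf{case (c2)} and in particular the boundary $\beta=2/\alpha-1$: there one must simultaneously control a heavy-tailed renewal limit and the singular integral weight, and show that the two sources of randomness do not interact in the limit (i.e.\ that the $|\log(1-W)|$ increments, which are ``light'' relative to the $|\log W|$ increments, contribute only the deterministic kernel $(u-s)^{-\beta}$ and not extra jumps); the additional assumption~\eqref{0011} is precisely what rules out a third scaling regime, and verifying that the contribution of the boxes near the boundary of the occupancy range is negligible in Skorokhod's $J_1$ (or $M_1$) topology will require the most care.
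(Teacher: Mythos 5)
Your overall frame (Poissonize, reduce everything to the environment-determined quantity $\sum_{k\ge 0}1_{\{S_k\le ut<S_k+\eta_{k+1}\}}$, de-Poissonize at the end) matches the paper's Lemmas \ref{red2} and \ref{depois}, and your de-Poissonization sketch is fine in spirit. But there is a genuine gap at the heart of the argument: you attribute the fluctuations in cases (a), (b1), (c1) ``solely to the fluctuations of the renewal counting function'' and propose to obtain the Gaussian limit $V$ from an invariance principle for the walk $S_k$ plus continuous mapping. That cannot work, and it is exactly the point where the threshold conditions \eqref{555}, \eqref{555555}, \eqref{0011} come from. There are two competing noise sources in $\sum_{k}1_{\{S_k\le ut<S_k+\eta_{k+1}\}}$: (i) the conditional occupancy (thinning) noise of the $\eta_{k+1}=|\log(1-W_{k+1})|$ given the walk, i.e.\ the martingale part $\sum_k\big(1_{\{S_k\le ut<S_k+\eta_{k+1}\}}-(1-G(ut-S_k))1_{\{S_k\le ut\}}\big)$, whose conditional variance is of order $q^2(t)=\mu^{-1}\int_0^t(1-G(y))\,{\rm d}y$; and (ii) the fluctuations of the walk itself transmitted through the renewal shot noise $\sum_k(1-G(ut-S_k))1_{\{S_k\le ut\}}$, which are of order $g(t)$ (with $g(t)=\sqrt{\sigma^2\mu^{-3}t}\,(1-G(t))$ in case (a) and $g(t)=\mu^{-1-1/\alpha}c(t)(1-G(t))$ in cases (b), (c)). The process $V$, with covariance $t^{1-\beta}-(t-s)^{1-\beta}$, is the limit of source (i); the paper proves this by a martingale central limit theorem (Proposition \ref{mart}). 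It is not a continuous-mapping image of a renewal invariance principle: conditionally on the walk, the counting function carries no randomness at all at the scale $q(t)$, and indeed $V$ is not of the form $\int_{[0,u]}(u-s)^{-\beta}{\rm d}Z_2(s)$ (its covariance differs). Conditions \eqref{555} and \eqref{555555} are precisely the statement $g(t)/q(t)\to 0$, i.e.\ that the walk fluctuations you want to rely on are negligible in (a), (b1), (c1); conversely, (b2) and (c2) are the regime $g(t)/q(t)\to\infty$, where the walk dominates and the shot-noise limit $W_{\alpha,\beta}$ appears (via the limit theorems for renewal shot noise processes cited in the paper).

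Because your proposal lacks this decomposition, it has no mechanism that produces $V$, no proof that the subdominant noise source is negligible in either regime (even in (b2), (c2) one must show the occupancy noise of order $q(t)$ is killed by the normalization $g(t)$ --- your remark that the $\eta$-increments ``contribute only the deterministic kernel'' is the right intuition but is given no quantitative backing), and no explanation of why \eqref{555}, \eqref{555555}, \eqref{0011} are the correct thresholds: they are not integrability or tail-balance conditions but the comparison of the two scales $q(t)$ and $g(t)$. Adding the two-source decomposition, the martingale CLT for part (i), and the shot-noise limit theorem for part (ii) is what is needed to make the proof go through.
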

\begin{rem}\label{ccc}
(I) Existence and the properties of functions $c(t)$ claimed in
parts (b) and (c) of Theorem \ref{main3} are well-known. For
instance,  a function $c(t)$ in part (b) is an asymptotic inverse
to $t^2/\int_{[0,\,t]}y^2{\rm d}\mmp\{|\log W|\in {\rm d}y\}\sim
t^2/\ell(t)$. Consequently, according to Proposition 1.5.15
\cite{BGT}, $c(t) \sim t^{1/2}(L^\#(t))^{1/2}$, where $L^\#(t)$ is
the de Bruijn conjugate of $1/\ell(t^{1/2})$.

\noindent (II) Suppose that the distribution of $|\log W|$ is nonlattice. As
shown in Theorem 1.2 \cite{Iks2}, the weak convergence of
one-dimensional distributions in parts (a), (b1) and (c1) of
Theorem \ref{main3} does not require  the regular variation of $\mmp\{|\log (1-W)|>x\}$. It
suffices to assume that $\me |\log (1-W)|=\infty$ along with all the
other assumptions of the theorem.

\noindent (III) Let $\big(Z_2(s)\big)_{s\geq 0}$ be a Brownian
motion, independent of $\big(V(s)\big)$. A.~Yu. Pilipenko
attracted our attention to the fact that finite-dimensional
distributions of the process $\big(V(s)+Z_2(s^{1-\beta})\big)$
(recall that $\beta\in [0,1)$) coincide with those of a scaled
{\it fractional Brownian motion}. Clearly, if  $\beta=0$ then
finite-dimensional distributions of $\big(V(s)\big)$ coincide with
those of a Brownian motion.
\end{rem}

Two situations, though worth investigating, are ruled out in the
present paper.

\noindent (I) Suppose that the assumptions of part (b) or (c) of
Theorem \ref{main3} hold, and the limits in relation \eqref{555} or
\eqref{555555}, respectively, are finite and nonzero. The authors
do not know whether there is even the one-dimensional convergence
in these cases.

\noindent (II) The theorems just formulated are collected together
as their proofs follow the same approach. Unfortunately, such an
approach is not applicable to multiplicative random walks with
$\me (|\log W|+|\log (1-W)|)<\infty$. For this reason we do not
treat this case here. Under the latter assumption two different
proofs of {\it one-dimensional convergence} of the number of empty
boxes can be found in \cite{GINR, GIR}.

The structure of the paper is as follows. In the context of
problems related to random allocations a Poissonization which is a
transition from the original scheme with deterministic number of
balls to a scheme with random (Poisson) number of balls is a
rather efficient tool. In Section \ref{po} we formulate three
lemmas which are proved in Sections \ref{po1} and \ref{po2},
respectively. While two of these indicate that the Poissonization
of the Bernoulli sieve is expedient, the third takes care of a
de-Poissonization, i.e., a reverse transition. The proofs of
Theorems \ref{main}, \ref{main2} and \ref{main3} are given in
Sections \ref{section_pp}, \ref{proof_main2} and
\ref{proof_main3}, respectively. Finally, some auxiliary results
are collected in the Appendix.

\section{Poissonization and de-Poissonization}\label{po}

Let $\big(\tau_k\big)_{k\in\mn}$ be a Poisson flow with unit
intensity which is independent of the random variables
$\big(U_j\big)$ and the multiplicative random walk $T$. Denote by
$\big(\pi(t)\big)_{t\geq 0}$ the corresponding Poisson process
defined by
$$\pi(t):=\#\{k\in\mn: \tau_k\leq t\}, \ \ t\geq 0.$$

Instead of the scheme with $n$ balls we will work with a {\it
Poissonized} version of the Bernoulli sieve in which, for
$j\in\mn$, the $j$th ball (the point $U_j$) is thrown in the boxes
(the intervals $(T_{k-1}, T_k]$) at the epoch $\tau_j$. Thus, the
random number $\pi(t)$ of balls will be allocated over the boxes
within $[0,t]$. Denote by $\pi_k(t)$ the number of balls which
fall into the $k$th box within $[0,t]$. It is evident that, given
the collection (environment) $\big(T_j\big)$, (1)
the process $\big(\pi_k(t)\big)_{t\geq 0}$ is, for each $k$ a Poisson process
with intensity $P_k=T_{k-1}-T_k$, and (2)  these
processes are {\it independent} for different $k$. It is this latter property which
demonstrates the advantage of the Poissonized scheme over the
original one.

Put $M(t):=M_{\pi(t)}$, $K(t):=K_{\pi(t)}$ and $L(t):=L_{\pi(t)}$.
With this notation in view  $L(t)$ is  the number of empty boxes within
the occupancy range obtained by throwing $\pi(t)$ balls. Recall
that the Bernoulli sieve can be interpreted as the Karlin's
allocation scheme in the random environment $\big(W_k\big)$ which
is given by i.i.d. random variables. The first two auxiliary
results of the present paper reveal that one can investigate the
asymptotics of a relatively simple functional which is determined by
the environment only rather than that of $L(t)$.

Denote by $\big(S_n\big)_{n\in\mn_0}$  a zero-delayed random walk
defined by
$$
S_0:=0, \ \ S_n:=|\log W_1|+\ldots+|\log W_n|, \ \ n\in\mn,
$$
and put $\eta_n:=|\log (1-W_n)|$.
\begin{lemma}\label{red}
If  $\me |\log W|=\infty,$ then
\begin{equation}\label{001infnu}
L(e^{ut})-\sum_{k\geq 0}1_{\{S_k\leq ut<S_k+\eta_{k+1}\}} \
\overset{{\rm f.d.}}{\Rightarrow} \ 0, \ \ t\to\infty.
\end{equation}
\end{lemma}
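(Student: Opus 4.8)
The plan is to show that the difference between $L(e^{ut})$ and the environment-driven functional $\sum_{k\geq 0}\mathbf{1}_{\{S_k\leq ut<S_k+\eta_{k+1}\}}$ is negligible in the f.d.d. sense by controlling, for each fixed $u$, two sources of error: (i) boxes that are empty in the Poissonized Karlin scheme but whose ``emptiness'' is not detected by the simple indicator built from the environment, and (ii) conversely, indices $k$ with $S_k\leq ut<S_k+\eta_{k+1}$ whose corresponding box receives at least one ball. Since $S_k=|\log T_k|$, the box with index $k+1$ is the interval $(T_{k+1},T_k]$ whose position on the logarithmic scale near $ut$ is governed by $S_k$ and $S_k+\eta_{k+1}=|\log T_{k+1}|+\dots$; so $\mathbf{1}_{\{S_k\leq ut<S_k+\eta_{k+1}\}}$ picks out exactly the box that the running maximum-type level $ut$ falls into. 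The heuristic is that a box near the ``frontier'' $ut$ is empty iff, conditionally on the environment, none of the $\pi(e^{ut})$ balls land in it, and because $P_{k+1}=T_k(1-W_{k+1})$ with $e^{ut}P_{k+1}$ of moderate size, this event has probability close to the deterministic indicator only after suitable averaging — the discrepancy comes from boxes where $e^{ut}P_{k+1}$ is neither very small nor very large.

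\medskip

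\noindent\textbf{Key steps, in order.} First I would reduce the problem to a single coordinate $u$: since f.d.d. convergence to the zero vector is implied by (indeed equivalent to) marginal convergence to $0$ for each $u_i$ together, and the indicator processes are monotone-type functionals of the same environment, it suffices to prove $L(e^{ut})-\sum_{k\geq 0}\mathbf{1}_{\{S_k\leq ut<S_k+\eta_{k+1}\}}\ \tp\ 0$ for each fixed $u>0$ (then a union bound over finitely many $u_i$ finishes the f.d.d. statement). Second, I would split the range of box indices $k$ into a ``bulk'' region where $S_k$ is bounded away from $ut$ on both sides and a ``window'' region $|S_k-ut|\leq A_t$ for a slowly growing $A_t$; the key fact to invoke is that, because $\me|\log W|=\infty$, the renewal function associated with $(S_n)$ grows sub-linearly and the number of $S_k$ in any window of fixed width near level $ut$ is $o_P(1)$ in a suitable sense — this is the place where Assumption~\eqref{3old}'s hypothesis (or rather $\me|\log W|=\infty$ alone) does the heavy lifting, through a renewal-theoretic estimate on $\#\{k:\ ut-A_t\leq S_k\leq ut\}$. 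Third, within the bulk I would show both $L(e^{ut})$ and the target sum count essentially the same boxes: a box well to the left of $ut$ (i.e. $S_k+\eta_{k+1}<ut-A_t$, so $T_k$ already small) is occupied with overwhelming probability since $e^{ut}P_{k+1}\to\infty$, while a box well to the right ($S_k>ut$) is empty for both; so the symmetric difference is confined to indices with $S_k\in[ut-A_t,ut+A_t]$ or $S_k+\eta_{k+1}\in[ut-A_t,ut+A_t]$. Fourth, on this window I would estimate the expected number of ``misclassified'' boxes, conditioning on the environment: $\me\big[\,|L(e^{ut})-\text{sum}|\,\big|\,\text{environment}\big]$ is bounded by a sum over window indices of $\mmp\{\pi_k(e^{ut})=0,\ \text{indicator}=0\}+\mmp\{\pi_k(e^{ut})\geq 1,\ \text{indicator}=1\}$, each term being $\le e^{-e^{ut}P_k}$ or $1-e^{-e^{ut}P_k}$ of a ``wrong-sign'' type, and then show the unconditional expectation of this sum tends to $0$ using the window estimate from step two together with tail regular variation of $|\log(1-W)|$ controlling $\eta_{k+1}$.

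\medskip

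\noindent\textbf{The main obstacle.} The delicate point is step four on the window: when $e^{ut}P_{k+1}$ is of order one, a box is neither clearly empty nor clearly full, and the simple indicator $\mathbf{1}_{\{S_k\leq ut<S_k+\eta_{k+1}\}}$ does not track $\mathbf{1}_{\{\pi_{k+1}(e^{ut})=0\}}$ pointwise; what saves the argument is that the \emph{number} of such borderline indices is asymptotically negligible because $\me|\log W|=\infty$ forces the renewal points $(S_k)$ to be sparse near level $ut$ — so the expected count of borderline boxes, being the renewal mass in a window times an $O(1)$ per-box probability, vanishes. Making ``the renewal mass in a window of width $A_t$ near $ut$ is $o(1)$ after the correct normalization'' precise, while simultaneously letting $A_t\to\infty$ fast enough to make the bulk error exponentially small but slow enough to keep the window mass small, is the real technical core; I expect to use an Appendix lemma on the behavior of $\sum_k\mmp\{S_k\in[x,x+A]\}$ as $x\to\infty$ under $\me|\log W|=\infty$, analogous to the estimates underpinning Proposition~\ref{old}.
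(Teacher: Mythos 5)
Your overall plan (fix $u$, compare the two counts box by box, confine the discrepancy to a window around log-level $ut$, and kill the window by a Blackwell-type estimate) is a workable alternative to the paper's route, which instead writes $L(e^t)=M(e^t)-K(e^t)$, proves $M(e^t)-\nu(t)\tp 0$, replaces $K(e^t)$ by its conditional mean $\sum_k(1-\exp(-e^{t-S_k}(1-W_{k+1})))$ via a conditional variance estimate, and then uses the key renewal theorem with directly Riemann integrable functions. However, as written your proposal has a genuine gap at the right boundary: $L(e^{ut})$ counts empty boxes only up to the \emph{random} index $M(e^{ut})$ of the last occupied box, and your justification that a box with $S_k>ut$ ``is empty for both'' does not give zero contribution to the symmetric difference. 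An empty box with $S_k$ slightly above $ut$ \emph{is} counted in $L(e^{ut})$ whenever some ball occupies a box of even larger index, while it never contributes to the indicator sum; and boxes with $S_k>ut+A_t$ contribute nothing to $L(e^{ut})$ only because with high probability no ball lands beyond log-level $ut+A_t$. Nowhere do you control $M(e^{ut})$ or the extreme ball $-\log\min_{j\leq \pi(e^{ut})}U_j$, so the claim that the symmetric difference is confined to the window is unsupported. This is precisely Step 1 of the paper's proof (via $M(e^t)=\nu(E(\pi(e^t)))$, the Gumbel limit for $E(\pi(e^t))-t$, a conditional Markov bound and $U(t+h)-U(t)\to 0$); in your scheme a cheap substitute such as $\mmp\{\text{some ball beyond } ut+A_t\}\leq \me\,\pi(e^{ut})e^{-ut-A_t}=e^{-A_t}$ would do, but some such step must appear, together with the window count for $S_k\in(ut,ut+A_t]$.

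A second problem is your treatment of the borderline indices with $S_k+\eta_{k+1}$ near $ut$: you invoke ``tail regular variation of $|\log(1-W)|$'', but Lemma \ref{red} assumes nothing whatsoever about $1-W$ (regular variation is a hypothesis of the theorems, not of this lemma), so an argument leaning on it would not prove the stated result. What you actually need is that $\int_{[0,\,ut]}\mmp\{\eta\in[ut-y-A,\,ut-y+A]\}\,{\rm d}U(y)\to 0$ for fixed $A$, which follows from the key renewal theorem in the infinite-mean case because $y\mapsto\mmp\{\eta\in[y-A,y+A]\}$ is directly Riemann integrable (bounded sums of suprema over unit intervals), after which you may let $A=A_t\to\infty$ slowly by a diagonal argument; this is in substance what the paper does with the functions $g_2,g_3$ of Lemma \ref{dri}. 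Relatedly, your justification of the window estimate for the $S_k$ themselves by ``sub-linear growth of $U$'' is not the right fact: sub-linearity of $U$ does not control increments; the correct ingredient is Blackwell's theorem in the infinite-mean case, $U(t+h)-U(t)\to 0$, exactly as in display \eqref{008} of the paper. With these two repairs (control of the occupancy range, and a renewal-theoretic rather than regular-variation bound on borderline boxes), your window argument could be completed, but it then uses the same machinery as the paper rather than a genuinely lighter one.
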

\begin{lemma}\label{red2}
If $\me |\log W|<\infty,$ then
\begin{equation}\label{001finmu}
{L(e^{ut})-\sum_{k\geq 0}1_{\{S_k\leq ut<S_k+\eta_{k+1}\}}\over
a(t)} \ \overset{{\rm f.d.}}{\Rightarrow} \ 0, \ \ t\to\infty
\end{equation}
for any function $a(t)$ satisfying $\lit a(t)=\infty$. In other
words, finite-dimensional distributions of the process
$\big(L(e^{ut})-\sum_{k\geq 0}1_{\{S_k\leq ut<S_k+\eta_{k+1}\}},\,
t\geq 0\big)$ are tight.
\end{lemma}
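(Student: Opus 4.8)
The plan is to show that, for each fixed $u>0$, the variables $L(e^{ut})-\sum_{k\ge0}1_{\{S_k\le ut<S_k+\eta_{k+1}\}}$ are stochastically bounded as $t\to\infty$. This is precisely the content of the lemma: an $\mr^n$-valued family of random vectors is tight iff each of its coordinates is, so it suffices to treat one $u$ at a time, and for a real sequence $(X_t)$ one has $X_t/a(t)\tp0$ for every deterministic $a(t)\to\infty$ if and only if $(X_t)$ is stochastically bounded. Writing $s=ut$ (the sum depends on $(u,t)$ only through $s$), it thus suffices to prove $D(s):=L(e^{s})-\sum_{k\ge0}1_{\{S_k\le s<S_k+\eta_{k+1}\}}=O_P(1)$ as $s\to\infty$.

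Set $t:=e^{s}$, $\kappa:=\#\{j\ge0:S_j\le s\}$ and $M:=M(t)$, and recall that, conditionally on the environment $\mathcal G:=\sigma(W_1,W_2,\ldots)$, the variables $(\pi_k(t))_{k\ge1}$ are independent with $\pi_k(t)$ having the Poisson law of mean $P_kt$. Since $T_{k-1}=e^{-S_{k-1}}$ and $P_k=T_{k-1}(1-W_k)=e^{-(S_{k-1}+\eta_k)}$, we have $P_kt=e^{-(S_{k-1}+\eta_k-s)}$, the equivalence $\{P_k<e^{-s}\}=\{S_{k-1}+\eta_k>s\}$, and — by monotonicity of $(T_{k-1})$ — the identity $\{k\ge1:T_{k-1}\ge e^{-s}\}=\{1,\ldots,\kappa\}$; hence $\sum_{k\ge0}1_{\{S_k\le s<S_k+\eta_{k+1}\}}=\#\{1\le k\le\kappa:P_k<e^{-s}\}=:N(s)$. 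Now $L(t)=\sum_{k=1}^{M}1_{\{\pi_k(t)=0\}}$; comparing this with $\sum_{k=1}^{\kappa}1_{\{\pi_k(t)=0\}}$ (these differ by at most $|M-\kappa|$ in absolute value, because boxes with index exceeding $M$ are empty) and then comparing the latter termwise with $N(s)$, one obtains
$$|D(s)|\ \le\ \Delta(s)+(M-\kappa)^{+}+(\kappa-M)^{+},\qquad \Delta(s):=\sum_{k\ge1}\big|1_{\{\pi_k(t)=0\}}-1_{\{S_{k-1}+\eta_k>s\}}\big|,$$
and it remains to bound each of the three terms on the right, uniformly in $s$.

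For $\Delta(s)$: conditioning on $\mathcal G$, writing $b_k:=S_{k-1}+\eta_k-s$ (so $P_kt=e^{-b_k}$), and using $1-e^{-x}\le\min(1,x)$,
$$\me[\Delta(s)\mid\mathcal G]=\sum_{k:\,b_k>0}\big(1-e^{-e^{-b_k}}\big)+\sum_{k:\,b_k\le0}e^{-e^{-b_k}}\ \le\ \sum_{k:\,b_k>0}e^{-b_k}+\sum_{k:\,b_k\le0}e^{-e^{-b_k}}.$$
Taking expectations, grouping the indices by the integer part of $b_k$, and using the standard renewal bound $\bar C:=\sup_{x\in\mr}U([x,x+1])<\infty$ for the renewal function $U(\cdot):=\sum_{j\ge0}\mmp\{S_j\in\cdot\}$ of $(S_j)$, together with $\sum_{k\ge1}\mmp\{S_{k-1}+\eta_k\in I\}=\int U(I-y)\,\mmp\{|\log(1-W)|\in{\rm d}y\}\le\bar C$ for any interval $I$ of length one, one gets $\me[\Delta(s)]\le\bar C\big(\sum_{j\ge0}e^{-j}+\sum_{j\ge0}e^{-e^{j}}\big)<\infty$ uniformly in $s$, so $\Delta(s)=O_P(1)$ by Markov's inequality. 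For $(M-\kappa)^{+}$: since $\kappa$ is a stopping time of $(S_j)$ and $S_\kappa>s$,
$$\mmp\{M>\kappa+J\}\ \le\ \me\Big[\sum_{k>\kappa+J}\big(1-e^{-P_kt}\big)\Big]\ \le\ \me\Big[t\sum_{k>\kappa+J}P_k\Big]\ =\ \me[tT_{\kappa+J}]\ =\ \me[e^{-(S_{\kappa+J}-s)}]\ \le\ (\me W)^{J},$$
which tends to $0$ as $J\to\infty$ because $\me W\in(0,1)$ (as $0<W<1$ a.s.); hence $(M-\kappa)^{+}=O_P(1)$.

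The last term $(\kappa-M)^{+}$, the length of the terminal run of empty boxes, is the main obstacle, and it is here that the hypothesis $\me|\log W|<\infty$ is used essentially. The event $\{\kappa-M\ge J\}$ forces boxes $\kappa-J+1,\ldots,\kappa$ to be empty, an event of conditional probability $\exp\big(-t(T_{\kappa-J}-T_\kappa)\big)$ given $\mathcal G$; bounding its exponent below by $tP_{\kappa-J+1}=\exp\big((s-S_{\kappa-1})+\sum_{l=\kappa-J+1}^{\kappa-1}|\log W_l|-\eta_{\kappa-J+1}\big)$ shows that such a run can occur only if the single summand $\eta_{\kappa-J+1}=|\log(1-W_{\kappa-J+1})|$ outweighs the partial sum $\sum_{l=\kappa-J+1}^{\kappa-1}|\log W_l|$ up to an $O(1)$ amount. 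Since $\me|\log W|<\infty$, this partial sum grows linearly in $J$ (by the strong law), while the conditional law, given $\kappa$, of the pre-critical block $(W_{\kappa-1},W_{\kappa-2},\ldots)$ is only mildly perturbed for fixed $J$ as $s\to\infty$; consequently the probability of this event tends to $0$ as $J\to\infty$ uniformly in large $s$, so $(\kappa-M)^{+}=O_P(1)$. (An alternative is to invoke the weak convergence, valid precisely because $\me|\log W|<\infty$, of the environment seen from the critical box, $\big(s-S_{\kappa-1};\,W_\kappa,W_{\kappa-1},W_{\kappa-2},\ldots\big)$, under which $\kappa-M$ converges in distribution to an a.s.\ finite limit.) Making this last estimate rigorous is the step requiring real care; combining the three bounds then yields $D(s)=O_P(1)$ as $s\to\infty$, completing the argument.
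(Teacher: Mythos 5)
Your route is genuinely different from the paper's. The paper proves Lemma \ref{red2} by rerunning the four steps of the proof of Lemma \ref{red}: it writes $L=M-K$, controls $K(e^t)$ through its conditional variance given the environment, controls $M(e^t)$ through the exact identity $M(e^t)=\nu\big(-\log\min(U_1,\dots,U_{\pi(e^t)})\big)$, and replaces ``renewal integrals of directly Riemann integrable functions tend to zero'' (true when $\me|\log W|=\infty$) by ``they stay bounded'' (Lemma \ref{dri2} and Blackwell's theorem), which is exactly what the normalization by $a(t)\to\infty$ tolerates. You instead compare the two counts box by box. Your preliminary reduction (coordinatewise tightness, and the equivalence of tightness with $X_t/a(t)\tp 0$ for every $a(t)\to\infty$), your bound for $\Delta(s)$ via $\sup_x U([x,x+1])<\infty$, and your geometric bound $\mmp\{M>\kappa+J\}\leq(\me W)^J$ for the overshoot are all correct, and the last two are arguably more elementary than the paper's treatment.

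The genuine gap is the third term, $(\kappa-M)^{+}$, the terminal run of empty boxes. You identify the right event, but what you offer for it is not a proof: the increments $|\log W_l|$, $l=\kappa-J+1,\dots,\kappa-1$, and the variable $\eta_{\kappa-J+1}$ are indexed relative to the first-passage time $\kappa=\nu(s)$, so under the conditional law given $\{\kappa=k\}$ they are neither independent nor identically distributed; the strong law cannot be invoked for ``the partial sum grows linearly in $J$'', and the assertion that the pre-critical block is ``only mildly perturbed'' is precisely the renewal-theoretic statement that would have to be proved (decomposing over the value of $\kappa-J$ leads to an estimate of the form $\int_{[0,s]}h_J(s-x)\,{\rm d}U(x)$ for a suitable $h_J$, i.e., a key-renewal-type bound that must be made uniform in $s$ and small in $J$). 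The paper sidesteps this entirely: since the box containing a point $u$ has index $\nu(|\log u|)$, one has $M(e^s)=\nu(R(s))$ with $R(s)=-\log\min(U_1,\dots,U_{\pi(e^s)})$ and $R(s)-s$ converging to a Gumbel limit, so both $(M-\kappa)^{+}$ and $(\kappa-M)^{+}$ are increments of the renewal process $\nu$ over an interval of stochastically bounded length and are tight by Blackwell's theorem (Step 1 of the proof of Lemma \ref{red}, with the lattice-case supplement given for Lemma \ref{red2}). Either import that identity or carry out the renewal computation you only sketch; as written, this step is missing.
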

Lemma \ref{depois} given below allows us to implement a {\it
de-Poissonization}, i.e., a reverse transition from the scheme
with Poisson number of balls to the original scheme with
deterministic number of balls. Although the papers \cite{GINR,
Iks, Iks2} offer several approaches to the de-Poissonization of
the number of empty boxes, the result of Lemma \ref{depois} is the
strongest one out of those known to the authors, even if only one-dimensional
distributions are considered.
\begin{lemma}\label{depois}
With no assumptions on the expectation of $|\log W|,$
$$L(e^{ut})-L_{[e^{ut}]} \ \overset{{\rm f.d.}}{\Rightarrow} \ 0, \ \ t\to\infty.$$
\end{lemma}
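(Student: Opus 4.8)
The plan is to exploit the fact that $\pi(t)$ is concentrated around $t$ together with the monotonicity properties of the occupancy variables in the number of balls. Fix $n\in\mn$ and a selection $0<u_1<u_2<\ldots<u_n<\infty$; it suffices to show that $L(e^{u_i t})-L_{[e^{u_i t}]}\tp 0$ for each $i$, since joint convergence to a constant vector follows from marginal convergence. So fix $u>0$ and write $m=[e^{ut}]$, $\Pi=\pi(e^{ut})$. By the strong law of large numbers for the Poisson process, $\Pi/e^{ut}\to1$ a.s., and in fact for any $\varepsilon\in(0,1)$ we have $\mmp\{|\Pi-m|>\varepsilon m\}\to0$ as $t\to\infty$. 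The first step is therefore to reduce, on this high-probability event, the comparison of $L(e^{ut})=L_{\Pi}$ with $L_m$ to the comparison of $L_j$ with $L_m$ for $j$ ranging over the deterministic window $[(1-\varepsilon)m,(1+\varepsilon)m]$.

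The second step is to control the oscillation of $L_j$ over that window. Here I would use that, as functions of the sample size, $K_j$ and $M_j$ are both nondecreasing (adding a ball can only create a new occupied box or push the last occupied box further out, never the reverse), hence $L_j=M_j-K_j$ need not itself be monotone, but the two pieces $M_j$ and $K_j$ are. Consequently, writing $m_-=[(1-\varepsilon)m]$ and $m_+=[(1+\varepsilon)m]$, for every $j\in[m_-,m_+]$ one has
$$
|L_j-L_m|\ \leq\ (M_{m_+}-M_{m_-})+(K_{m_+}-K_{m_-}).
$$
Thus it remains to show that each of $M_{m_+}-M_{m_-}$ and $K_{m_+}-K_{m_-}$ tends to $0$ in probability (for a suitable choice of $\varepsilon=\varepsilon(t)\to0$, or first for fixed $\varepsilon$ and then letting $\varepsilon\downarrow0$). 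For the $K$-increment this is immediate from the known one-dimensional asymptotics: $K_m$ grows like a regularly varying function of $m$ (of index $\alpha$ or with a slowly varying rate, depending on the regime), which is slowly varying in $t$ when $m\asymp e^{ut}$, so $K_{m_+}-K_{m_-}=o(K_m)\cdot(\text{something})$ — more carefully, $K_{(1+\varepsilon)m}-K_{(1-\varepsilon)m}$ is stochastically negligible because $j\mapsto\me K_j$ varies slowly enough that a multiplicative $(1\pm\varepsilon)$ perturbation of $j=e^{ut}$ changes it by $o(1)$ after the appropriate normalization; I would make this precise by a first-moment bound, $\me(K_{m_+}-K_{m_-})=\sum_k\me\big[(1-P_k)^{m_-}-(1-P_k)^{m_+}\big]$, and estimate it using $\eqref{0030}$.

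The $M$-increment is the main obstacle, and it is where the structure of the Bernoulli sieve enters. The event $\{M_{m_+}>M_{m_-}\}$ means that among the balls indexed $m_-{+}1,\ldots,m_+$ at least one falls strictly beyond the last box occupied by the first $m_-$ balls; because in the Bernoulli sieve the boxes beyond index $M_j$ carry, given the environment, total mass $T_{M_j}$, which is typically of order $1/m_-$, the expected number of such balls is of order $(m_+-m_-)\cdot\me T_{M_{m_-}}\asymp\varepsilon$, and $M_{m_+}-M_{m_-}$ is stochastically bounded by a geometric-type random variable with this small parameter. I would quantify this by conditioning on the environment and on the configuration after $m_-$ balls: given $T_{M_{m_-}}=:s$, the additional index increment is dominated by the number of records, among $m_+-m_-$ further i.i.d.\ uniforms restricted to $[0,s]$, of the associated renewal structure $(|\log W_k|)$ started at $|\log s|$; since $s\to0$ in probability at rate $\asymp e^{-ut}$ while $m_+-m_-\asymp\varepsilon e^{ut}$, this increment is $O_P(\varepsilon)$ uniformly enough to conclude. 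Letting $\varepsilon\downarrow0$ (or choosing $\varepsilon(t)\to0$ slowly) then gives $M_{m_+}-M_{m_-}\tp0$, and combining the two bounds yields $L(e^{ut})-L_{[e^{ut}]}\tp0$, as required. The delicate point throughout is that all estimates must hold \emph{without} any assumption on $\me|\log W|$, so I would phrase the environment-conditional bounds purely in terms of the tail $\mmp\{|\log W|>x\}$ and avoid invoking renewal-theoretic expectations that could be infinite.
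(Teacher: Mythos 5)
Your skeleton (reduce to marginal convergence in probability, concentrate $\pi(e^{ut})$ in a window $[(1-\varepsilon)m,(1+\varepsilon)m]$ around $m=[e^{ut}]$, use monotonicity of $K_j$ and $M_j$ to bound the oscillation of $L_j$) is viable, and it is a genuinely different route from the paper: the paper never touches $M$ directly, observing instead that the two schemes share the same balls and $K$ is nondecreasing, so $\mmp\{M(t)\neq M_{[t]}\}\leq\mmp\{K(t)\neq K_{[t]}\}$, and then compares $K(t-x\sqrt{t})$ with $K(t+x\sqrt{t})$ using the CLT for $\tau_{[t]}$. In your decomposition the $M$-increment, which you call the main obstacle, is actually the easy part and needs none of the record/renewal domination you sketch: since the minimal sample point lies in the last occupied box, $T_{M_{m_-}}\leq\min(U_1,\ldots,U_{m_-})$, whence $\mmp\{M_{m_+}\neq M_{m_-}\}\leq(m_+-m_-)\,\me T_{M_{m_-}}\leq (m_+-m_-)/(m_-+1)=O(\varepsilon)$ uniformly in $t$, with no assumptions on $W$ whatsoever.

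The genuine gap is your treatment of the $K$-increment. Lemma \ref{depois} is stated without any hypotheses on $W$ (beyond $W\in(0,1)$) and is invoked under the mutually different assumptions of Theorems \ref{main}, \ref{main2} and \ref{main3}, so you cannot ``estimate it using \eqref{0030}''. Moreover, even regime by regime, first-order asymptotics or regular variation of $j\mapsto \me K_j$ do not control increments: from $\me K_j\sim f(j)$ one only gets $\me K_{m_+}-\me K_{m_-}=o(f(m))$ with $f(m)\to\infty$, which says nothing about the unnormalized difference that the lemma requires to vanish. What is needed is a uniform bound such as $\me(K_{m_+}-K_{m_-})\leq (m_+-m_-)\,\me\sum_k P_k(1-P_k)^{m_-}\leq (m_+-m_-)\,\me\sum_k P_k e^{-m_- P_k}$ together with $\me\sum_k P_k e^{-sP_k}=O(1/s)$ as $s\to\infty$ for an arbitrary law of $W$. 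This last estimate is exactly what the paper's renewal machinery delivers: conditioning on $S_{k-1}$ gives $\me\sum_k P_k e^{-sP_k}=s^{-1}\int_{[0,\infty)}g_4(\log s-y)\,{\rm d}U(y)$ with $g_4(y)=-\varphi'(e^y)e^y$, and direct Riemann integrability of $g_4$ (Lemma \ref{dri}) combined with Lemma \ref{dri2} — which holds irrespective of whether $\me|\log W|$ is finite — yields the $O(1)$ bound for the integral. With that ingredient your argument closes (let $t\to\infty$, then $\varepsilon\downarrow0$); without it, the step ``$K_{m_+}-K_{m_-}$ is stochastically negligible'' is unsupported in the generality in which the lemma is stated and used.
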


\section{Proof of Lemma \ref{red} and Lemma \ref{red2}}\label{po1}

Put
$$
\nu(t):=\inf\{k\in\mn: S_k>t\}, \ \ t\geq 0,
$$
and denote by $U$ the renewal function generated by the random walk $S_k, k\geq 0$, i.e.,
$$
U(t):=\me \nu(t)=\sum_{k\geq 0}\mmp\{S_k\leq t\}, \ \ t\geq 0.
$$
In the sequel we  repeatedly use  the Blackwell theorem and the
key renewal theorem. Since $\me |\log W|=\infty$, a separate
treatment of the situation when the distribution of $|\log W|$ is
lattice is not needed. Indeed, using the monotonicity of $U$ and
appealing to the Blackwell theorem we conclude that in the lattice
case as well as in the case when the distribution of $|\log W|$ is
nonlattice,
\begin{equation}\label{008}
\lit\big(U(t+h)-U(t)\big)=0,
\end{equation}
for any $h>0$. Thus repeating almost literally the proof of the key
renewal theorem given in  \cite{Resnick}, p.~241 we conclude that
$$
\lit \int_{[0,\,t]}g(t-x){\rm d}U(x)=0 ,$$ provided that  $g$ is a
function directly Riemann integrable on $[0,\infty)$, and
$$
\lit \int_{[t,\,\infty)}g(t-x){\rm d}U(x)=0,
$$ provided that $g$ is directly Riemann integrable on $(-\infty,0]$.

\noindent {\sc Proof of Lemma \ref{red}}. It suffices to prove
that the left-hand side of relation \eqref{001infnu} with $u=1$
converges to zero in probability and to use the Cram\'{e}r-Wold
device. To simplify understanding we divide the proof into several
steps.

\noindent {\sc Step 1}. We intend to show that the maximal index
of boxes discovered by the Poisson process within $[0,e^t]$
satisfies
$$
M(e^t)-\nu(t) \ \tp
\ 0, \ \ t\to\infty.
$$

To this end, put $E(n):=-\log \min(U_1,\ldots, U_n)$ and note that
$M(e^t)=\nu(E(\pi(e^t)))$. As $n\to\infty$, the difference
$E(n)-\log n$ converges in distribution to a random variable
$E^\ast$ obeying the Gumbel distribution. Since the sequence
$\big(E(n)\big)$ is independent of the process $\big(\pi(t)\big)$,
 the difference $E(\pi(e^t))-\log \pi(e^t)$ converges in
distribution, as $t\to\infty$, to $E^\ast$, as well. By the weak law of large numbers
for Poisson processes, $\log \pi(e^t)-t \ \tp \ 0 $, as
$t\to\infty$. Hence
\begin{equation}\label{009}
\lit\big(E(\pi(e^t))-t\big)=E^\ast \ \ \text{in distribution}.
\end{equation}
For brevity, set $R(t):=E(\pi(e^t))$. Using Markov's
inequality and the fact that the renewal function $U(t)$ is
nondecreasing we obtain
\begin{eqnarray*}
\mmp\bigg\{\big(\nu(R(t))-\nu(t)\big)1_{\big\{0<R(t)-t\leq
\gamma\big\}}>\varepsilon\bigg|R(t)\bigg\}&\leq&\varepsilon^{-1}
\me \bigg(\big(\nu(R(t))-\nu(t)\big)1_{\big\{0<R(t)-t\leq
\gamma\big\}}\bigg|R(t)\bigg)\\&=&\varepsilon^{-1}
\big(U\big(t+R(t)-t\big)-U(t)\big)1_{\{0<R(t)-t\leq
\gamma\}}\\&\leq& \varepsilon^{-1} \big(U(t+\gamma)-U(t)\big),
\end{eqnarray*}
for any $\gamma>0$ and $\varepsilon>0.$ This combined with
\eqref{008} yields
$$
\lit \mmp\bigg\{\big(\nu(R(t))-\nu(t)\big)1_{\big\{0<R(t)-t\leq
\gamma\big\}}>\varepsilon\bigg|R(t)\bigg\} =0 \ \ \text{almost
surely}.
$$
Consequently
$$
\big(\nu(R(t))-\nu(t)\big)1_{\big\{0<R(t)-t\leq \gamma\big\}} \
\tp \ 0, \ \ t\to\infty,
$$ by the Lebesgue dominated convergence theorem.

Recalling \eqref{009} and using the absolute continuity of the law of
$E^\ast$ and   the inequality
$$
\mmp\big\{\big(\nu(R(t))-\nu(t)\big)1_{\big\{R(t)-t>\gamma\big\}}>\varepsilon\big\}\leq
\mmp\{R(t)-t>\gamma\},
$$
which holds for any $\gamma>0$ and $\varepsilon>0$, we see that
$$
\underset{t\to\infty}{\overline{\lim}}\,\mmp\big\{\big(\nu(R(t))-\nu(t)\big)1_{\big\{R(t)-t>\gamma\big\}}>\varepsilon\big\}\leq
\mmp\{E^\ast>\gamma\}
$$
and therefore,
$$
\underset{\gamma\to\infty}{\lim}\,\underset{t\to\infty}{\overline{\lim}}\,\mmp\big\{\big(\nu(R(t))-\nu(t)\big)1_{\big\{R(t)-t>\gamma\big\}}>\varepsilon
\big\}=0.
$$
The  estimates above lead to an important relation
$$\big(\nu(R(t))-\nu(t)\big)1_{\big\{R(t)-t>0\big\}} \ \tp \ 0, \ \
t\to\infty.$$ Arguing similarly we arrive at
$$\big(\nu(R(t))-\nu(t)\big)1_{\big\{R(t)-t\leq 0\big\}} \ \tp \ 0, \ \
t\to\infty.$$

\noindent {\sc Step 2}. We are seeking a good approximation for
$K(e^t)$ the number of boxes discovered by the Poisson process
within $[0,e^t]$. More precisely, we  prove that
$$K(e^t)-\sum_{k\geq
0}\bigg(1-\exp\big(-e^{t-S_k}(1-W_{k+1})\big)\bigg) \ \tp \ 0, \ \
t\to\infty.$$

We start with the representation
\begin{equation}\label{bo}
K(e^t)=\sum_{k\geq 1}1_{\{\pi_k(e^t)\geq 1\}},
\end{equation}
where $\pi_k(e^t)$ is the number of balls (in the Poissonized
scheme) landing in the $k$th box within $[0,\,e^t]$. In view of
\begin{equation}\label{0023}
\me \big(K(e^t)|(P_j)\big)=\sum_{k\geq
0}\bigg(1-\exp\big(-e^{t-S_k}(1-W_{k+1})\big)\bigg),
\end{equation}
 to establish the desired approximation it is sufficient
to prove that
\begin{equation}\label{0014}
\lit \me\, {\rm Var}\big(K(e^t)|(P_j)\big)=0.
\end{equation}
Given $\big(P_j\big)$, the indicators in \eqref{bo} are
independent. Hence
\begin{eqnarray*}
\me\, {\rm Var}\big(K(e^t)|(P_j)\big)&=&\me \sum_{k\geq 0}\bigg(\exp\big(-e^{t-S_k}(1-W_{k+1})\big)-\exp\big(-2e^{t-S_k}(1-W_{k+1})\big)\bigg)\nonumber\\
&=&
\int_{[0,\,\infty)}\bigg(\varphi(e^{t-y})-\varphi(2e^{t-y})\bigg){\rm
d}U(y) \label{eq7},
\end{eqnarray*}
where $\varphi(y):=\me e^{-y(1-W)}$. By Lemma \ref{dri} in the
Appendix,  $g_0(y):=\varphi(e^y)-\varphi(2e^y)$ is a
directly Riemann integrable function on $\mr$. Applying now the key renewal
theorem justifies relation \eqref{0014}.

\noindent {\sc Step 3}. We intend to prove the relation
$$Z(t):=\sum_{k\geq
0}\bigg(1-\exp\big(-e^{t-S_k}(1-W_{k+1})\big)\bigg)1_{\{S_k>t\}} \
\tp \ 0, \ \ t\to\infty.$$

According to Lemma \ref{dri},
$g_1(y):=\me\big(1-\exp\big(-e^y(1-W)\big)\big)$ is a directly
Riemann integrable  function on $(-\infty,0]$. Hence
$$\me Z(t)=\int_{[t,\,\infty)}g_1(t-y){\rm d}U(y) \ \to \ 0, \ \
t\to\infty,$$ by the key renewal theorem.

\noindent {\sc Step 4}. We are going to prove the relation
$$Y(t):=\sum_{k\geq
0}\bigg(\exp\big(-e^{t-S_k}(1-W_{k+1})\big)\big)-1_{\{S_k+\eta_{k+1}>t\}}
\bigg)1_{\{S_k\leq t\}} \ \tp \ 0, \ \ t\to\infty.$$

To this end, write $Y(t)$ as the difference of two nonnegative
random functions
\begin{eqnarray*}
Y(t)&=&\sum_{k\geq
0}\exp\big(-e^{t-S_k}(1-W_{k+1})\big)1_{\{S_k+\eta_{k+1}\leq
t\}}\\&-&\sum_{k\geq
0}\bigg(1-\exp\big(-e^{t-S_k}(1-W_{k+1})\big)\bigg)1_{\{S_k\leq
t<S_k+\eta_{k+1}\}}=:Y_1(t)-Y_2(t)
\end{eqnarray*}
and show that $\lit \me Y_i(t)=0$, $i=1,2$. Indeed, according to
Lemma \ref{dri}, the functions
$g_2(y):=\me\exp\big(-e^y(1-W)\big)1_{\{1-W>e^{-y}\}}$ and
$g_3(y):=\me (1-\exp(-e^y(1-W)))1_{\{1-W\leq e^{-y}\}}$ are
directly Riemann integrable on $[0,\infty)$. Hence, by the key
renewal theorem,
$$\me Y_1(t)=\int_{[0,\,t]}g_2(t-y){\rm d}U(y) \ \to \ 0, \ \
t\to\infty$$ and $$\me Y_2(t)=\int_{[0,\,t]}g_3(t-y){\rm d}U(y) \
\to \ 0, \ \ t\to\infty,$$ which is the desired result.

Combining conclusions of the four steps finishes the proof of
Lemma \ref{red}.

\noindent {\sc Proof of Lemma \ref{red2}}. If the
distribution  of $|\log W|$ is nonlattice the proof of Lemma \ref{red2}
proceeds along the same lines as that of Lemma \ref{red}. If the
distribution  of $|\log W|$ is $l$-lattice, for some $l>0$, an
additional argument is only needed for the step 1 of the proof of
Lemma \ref{red}. To implement the steps 2 through 4 one may use
Lemma \ref{dri2} from the Appendix.

\noindent {\sc Step 1}. Fix any $\gamma>0$ and select an $m\in\mn$ such
that $\gamma\leq ml$. With this $\gamma$ and $\varepsilon>0$ in hands, we use
the inequality
\begin{eqnarray*}
\mmp\bigg\{{\nu(R(t))-\nu(t)\over a(t)}1_{\big\{0<R(t)-t\leq
\gamma\big\}}>\varepsilon\bigg|R(t)\bigg\}\leq {U(t+ml)-U(t)\over
\varepsilon a(t)}
\end{eqnarray*}
in combination with the relation $$\lit
\big(U(t+ml)-U(t)\big)={ml\over \me |\log W|}$$ and the Lebesgue
bounded convergence theorem to conclude that
$${\nu(R(t))-\nu(t)\over a(t)}1_{\big\{0<R(t)-t\leq
\gamma\big\}} \ \tp \ 0, \ \ t\to\infty.$$ This completes the
proof for the step 1.

\section{Proof of Lemma \ref{depois}}\label{po2}

It suffices to check that
\begin{equation}\label{005}
K(t)-K_{[t]} \ \tp \ 0 \ \ \text{and} \ \ M(t)-M_{[t]} \ \tp \ 0,
\ \ t\to\infty,
\end{equation}
and to use the Cram\'{e}r-Wold device. In view of the inequality
$\mmp(M(t)\neq M_{[t]})\leq \mmp(K(t)\neq K_{[t]})$, only the
first relation in \eqref{005} needs a proof.

We first show that, for any $x>0$,
\begin{equation}\label{006}
K(t+x\sqrt{t})-K(t-x\sqrt{t}) \ \tp \ 0, \ \ t\to\infty.
\end{equation}
By \eqref{0023}, we have, for large enough $t$,
$$\me \big(K(t+x\sqrt{t})-K(t-x\sqrt{t})\big)=\int_{[0,\,\infty)}\big(\varphi((t-x\sqrt{t})e^{-y})
-\varphi((t+x\sqrt{t})e^{-y})\big){\rm d}U(y),$$ where
$\varphi(y)=\me e^{-y(1-W)}$. As the function $-\varphi^\prime(y)$
is nonincreasing, we infer
$$
\varphi\big((t-x\sqrt{t})e^{-y}\big)-\varphi\big((t+x\sqrt{t})e^{-y}\big)\leq
-\varphi^\prime\big((t-x\sqrt{t})e^{-y}\big)2x \sqrt{t}e^{-y},
$$
by the mean value theorem for differentiable functions, and
therefore
$$
\me \big(K(t+x\sqrt{t})-K(t-x\sqrt{t})\big)\leq {2x\sqrt{t}\over t-x\sqrt{t}}
\int_{[0,\,\infty)}\big(-\varphi^\prime\big((t-x\sqrt{t})e^{-y}\big)\big)(t-x\sqrt{t})e^{-y}{\rm
d}U(y).
$$
By Lemma \ref{dri},
$g_4(y)=-\varphi^\prime(e^y)e^y$ is a directly Riemann integrable function on
$\mr$. This and Lemma \ref{dri2}  yield
$$
\int_{[0,\,\infty)}\big(-\varphi^\prime\big((t-x\sqrt{t})e^{-y}\big)\big)(t-x\sqrt{t})e^{-y}{\rm
d}U(y)=O(1), \ \ t\to\infty.
$$
Hence, for any $x>0$,
$$
\lit \me \big(K(t+x\sqrt{t})-K(t-x\sqrt{t})\big)=0,
$$
which entails \eqref{006}.

The process $\big(K(s)\big)_{s\geq 0}$ is almost surely
nondecreasing. This implies that, for any $x>0$,
\begin{eqnarray*}
|K_{[t]}-K(t)|&=&|K(\tau_{[t]})-K(t)|1_{\{t-x\sqrt{t}\leq
\tau_{[t]}\leq
t+x\sqrt{t}\}}\\&+&|K(\tau_{[t]})-K(t)|1_{\{|\tau_{[t]}-t|>x\sqrt{t}\}}\\&\leq&
K(t+x\sqrt{t})-K(t-x\sqrt{t})+|K(\tau_{[t]})-K(t)|1_{\{|\tau_{[t]}-t|>x\sqrt{t}\}}.
\end{eqnarray*}
Hence, for any $\varepsilon>0$,
\begin{eqnarray*}
\mmp\{|K_{[t]}-K(t)|>2\varepsilon\}&\leq&
\mmp\{K(t+x\sqrt{t})-K(t-x\sqrt{t})>\varepsilon\}\\
&+&\mmp\{|K(\tau_{[t]})-K(t)|1_{\{|\tau_{[t]}-t|>x\sqrt{t}\}}>\varepsilon\}\\&\leq&
\mmp\{K(t+x\sqrt{t})-K(t-x\sqrt{t})>\varepsilon\}+\mmp\{|\tau_{[t]}-t|>x\sqrt{t}\}.
\end{eqnarray*}
Recalling \eqref{006} and using the central limit theorem give
$$
\underset{t\to\infty}{\overline{\lim}}\mmp\{|K_{[t]}-K(t)|>2\varepsilon\}\leq
\mmp\{|\mathcal{N}(0,1)|>x\},
$$
where $\mathcal{N}(0,1)$ stands for a random variable with the
standard normal law. Letting $x\to\infty$ establishes the first
relation in \eqref{005}. The proof is complete.

\section{Proof of Theorem \ref{main}}\label{section_pp}

According to Lemma \ref{depois}, it suffices to show that
\begin{equation*}
L(e^{ut}) \ \overset{{\rm f.d.}}{\Rightarrow} \ R_{\alpha,\,c}(u),
\ \ t\to\infty.
\end{equation*}
Condition \eqref{0030} entails $\me |\log W|=\infty$. Hence
applying Lemma \ref{red} we conclude that the desired assertion is
equivalent to
\begin{equation}\label{0034} \sum_{k\geq
0}1_{\{S_k\leq ut<S_k+\eta_{k+1}\}} \ \overset{{\rm
f.d.}}{\Rightarrow} \ R_{\alpha,\,c}(u), \ \ t\to\infty.
\end{equation}

We  introduce  more notation to be used in this section and
Lemma \ref{weak_joint_conv} in the Appendix:

\noindent $D:=D[0,\infty)$~--~ the Skorohod space of
right-continuous real-valued functions on $[0,\infty)$ which have
finite limits from the left on $(0,\infty)$;

\noindent $M_p ([0,\infty)\times (0,\infty])$~--~ the set of Radon
point measures on $[0,\infty)\times (0,\infty]$ endowed with the
vague topology;

\noindent $C_K([0,\infty)\times (0,\infty])$~--~ the set of
nonnegative continuous functions on $[0,\infty)\times (0,\infty]$
with compact support\label{foot1}\footnote{We alert the reader
that the roles of $0$ and $\infty$ must be interchanged for the
second coordinate so that the sets $[0,a]\times [b,\infty]$ are
compact for $a,b>0$.};

\noindent $\mu_{\alpha,\,c}$~--~a measure on
$(0,\infty]\times(0,\infty]$ such that
$$\mu_{\alpha,\,c}\big\{(u,v):u> x_1 \ \ \text{or} \ \
v>x_2\big\}=x_1^{-\alpha}+c^{-1} x_2^{-\alpha}, \ \ x_1x_2>0,$$
here the constant $c$ is the same as in \eqref{0030}.

Also, recall the notation $N^{(\alpha,\,c)}_\infty$,
$\nu_{\alpha,\,c}$, $\big(X_\alpha(t)\big)$ and $\big(S_n\big)$
introduced in the paragraphs preceding Theorem \ref{main} and
Lemma \ref{red}, respectively.

It is well-known that the condition $\mmp\{|\log W|>x\}\sim
x^{-\alpha}\ell(x)$ with $\alpha\in (0,1)$ ensures
\begin{equation*}
{S_{[ut]}\over c(t)} \ \Rightarrow \ X_\alpha(u), \ \ t\to\infty
\end{equation*}
on $D$ equipped with the Skorohod $J_1$-topology, where $c(t)$ is
any positive function satisfying $\lit tc^{-\alpha}(t)\ell(c(t))
=1$ (such functions do exist, see Remark \ref{ccc}(I)). Also, we
have
\begin{equation}\label{0027} {S_{[ut]-1}\over c(t)} \ \Rightarrow \
X_\alpha(u), \ \ t\to\infty
\end{equation}
under the $J_1$-topology on $D$, where $S_{[ut]-1}=0$ for $0\leq
u<1/t$. Indeed, according to Theorem 3 \cite{Bing}, the obvious
weak convergence of finite-dimensional distributions entails the
weak convergence under the $J_1$-topology, as, for each $t>0$,
paths of the process in the left-hand side of \eqref{0027} are
nondecreasing almost surely, and the limiting subordinator is
stochastically continuous. Further, according to Proposition 3.21
\cite{ResnickBook}, the condition $\mmp\{|\log (1-W)|>x\}\sim c^{-1}
x^{-\alpha}\ell(x)$ entails the convergence
\begin{equation*}
\sum_{k\geq 1}\varepsilon_{(k/t,\, \eta_k/c(t))} \ \Rightarrow \
N^{(\alpha,\,c)}_\infty, \ \ t\to\infty
\end{equation*}
in $M_p([0,\infty)\times (0,\infty])$ equipped with the vague
topology. By the definition of vague convergence the latter is
equivalent to the following one-dimensional convergence
\begin{equation}\label{wcpp}
\sum_{k\geq 1}g\big(k/t,\, \eta_k/c(t)\big) \ \dod \ \sum_m
g(t_m,\,j_m), \ \ t\to\infty
\end{equation}
for any $g\in C_K([0,\infty)\times (0,\infty])$.

If the jumps of random walk had the same law as $|\log W|$ and
were independent of the sequence $\big(\eta_k\big)=\big(|\log
(1-W_k)|\big)$ then relation \eqref{0034} would follow from
Corollary 2.3 \cite{Res}. In the present situation where the
aforementioned independence is absent the proof given by Mikosch
and Resnick still applies except that the relation
\begin{equation}\label{joint convergence}
\bigg({S_{[ut]-1}\over c(t)},\, \sum_{k\geq 1}
\varepsilon_{(k/t,\, \eta_k/c(t))}\bigg) \ \Rightarrow \
(X_\alpha(u), N^{(\alpha,\,c)}_\infty), \ \ t\to\infty
\end{equation}
on $D \times M_p([0,\infty)\times (0,\infty])$ endowed with the
product topology which is generated by $J_1$ and vague topologies
{\it has to be proved}, whereas in the situation treated in
\cite{Res} \eqref{joint convergence} automatically follows.
Indeed, if relation \eqref{joint convergence} holds true then
repeating literally the last fragment of the proof of Theorem 2.1
\cite{Res} enables us to infer
$$\sum_{k\geq 0}\varepsilon_{(S_k/c(t),\,\eta_{k+1}/c(t))} \
\Rightarrow \ \sum_m\varepsilon_{(X_\alpha(t_m),\,j_m)}, \ \
t\to\infty$$ on $M_p([0,\infty)\times (0,\infty])$. A transition
from the last relation to \eqref{0034} may be implemented along
the lines of the proof of Corollary 2.3 \cite{Res}.

According to Lemma \ref{weak_joint_conv}, relation \eqref{joint
convergence} follows if we can prove
$$
\bigg({S_{[ut]-1}\over c(t)}, \sum_{k\geq 1}
g\big(k/t,\,\eta_k/c(t)\big)\bigg) \ \Rightarrow \ \big(X_\alpha
(u), \sum_m g(t_m,\,j_m)\big), \ \ t\to\infty
$$
on $D\times [0,\infty)$, for any function $g\in
C_K([0,\infty)\times (0,\infty])$.

Considering the second coordinates as functions in $D$ which take
constant values and appealing to Lemma 2.6 \cite{Iglehart} we
conclude that it suffices to prove the weak convergence of
finite-dimensional distribution, i.e.,
\begin{equation}\label{fin_dim_joint1}
\sum_{i=1}^n \gamma_i {S_{[u_i t]-1}\over c(t)}+\gamma \sum_{k\geq
1}g\big(k/t, \eta_k/c(t)\big) \ \dod \ \sum_{i=1}^n\gamma_i
X_{\alpha}(u_i)+\gamma \sum_{m}g(t_m,\,j_m), \ \ t\to\infty
\end{equation}
for any $n\in\mn$, any $\gamma, \gamma_1,\ldots, \gamma_n\geq 0$
and any $0< u_1<u_2<\ldots< u_n$, as well as the tightness of the
coordinates.

The tightness of the coordinates follows from \eqref{0027} and
\eqref{wcpp}, respectively. By technical reasons it is more
convenient to check the relation
\begin{equation}\label{fin_dim_joint}
\sum_{i=1}^n \gamma_i {S_{[u_i t]}\over c(t)}+\gamma \sum_{k\geq
1}g\big(k/t, \eta_k/c(t)\big) \ \dod \ \sum_{i=1}^n\gamma_i
X_{\alpha}(u_i)+\gamma \sum_{m}g(t_m,\,j_m), \ \ t\to\infty
\end{equation}
which is equivalent to \eqref{fin_dim_joint1}, by Slutsky's lemma.

Let us check \eqref{fin_dim_joint} by applying the method used to
prove Proposition 3.21 \cite{ResnickBook}. For $z>0$, we have
\begin{eqnarray*}
\phi_t(z)&:=&\me \exp\Big(-z\Big(\sum_{i=1}^n \gamma_i {S_{[u_it]}\over c(t)}+\gamma \sum_{k\geq 1} g\big(k/t,\, \eta_k/c(t)\big)\Big)\Big)\\
&=&\me \exp\Big(-z\Big(\sum_{i=1}^n {\gamma_i\over
c(t)}\sum_{k\geq 1}|\log W_k|1_{\{k\leq u_i t\}}+\gamma\sum_{k\geq
1}
g\big(k/t, \eta_k/c(t)\big)\Big)\Big)\\
&=&\prod_{k\geq 1}\me \exp\Big(-z\Big({|\log W|\over c(t)}\sum_{i=1}^n \gamma_i1_{\{k\leq u_it\}}+\gamma g\big(k/t,\,|\log (1-W)|/c(t)\big)\Big)\Big)\\
&=&\prod_{k\geq 1}\int_{[0,\,\infty)\times
[0,\,\infty)}\big(1-K(z,u,v,k/t)\big)\mmp\Big\{{|\log W|\over
c(t)}\in{\rm d}u,{|\log (1-W)|\over c(t)}\in{\rm d}v\Big\} ,
\end{eqnarray*}
where $K(z,u,v,w):=1-\exp\Big(-z\Big(u\sum_{i=1}^n \gamma_i
1_{\{w\leq u_i\}}+\gamma g(w,v)\Big)\Big)$. Denote by $C$ the
(compact) support of $g$. It is clear that
\begin{eqnarray*}
&&\hspace{-1cm}\int_{[0,\,\infty)\times [0,\,\infty)}K(z,u,v,k/t)\mmp\Big\{{|\log W|\over c(t)}\in{\rm d}u,\,{|\log (1-W)|\over c(t)}\in{\rm d}v\Big\}\\
&\leq & \me \Big(1-\exp\Big(-{z|\log W|\over c(t)}\sum_{i=1}^n
\gamma_i1_{\{k\leq u_i t\}}\Big)\Big)+ \mmp\Big\{\Big({|\log
W|\over c(t)},\,{|\log (1-W)|\over c(t)}\Big)\in C\Big\},
\end{eqnarray*}
for any $k\in\mn$, which implies
\begin{equation}\label{aux1}
\lit \sup_{k\in\mn}\int_{[0,\,\infty)\times
[0,\,\infty)}K(z,u,v,k/t)\mmp\Big\{{|\log W|\over c(t)}\in{\rm
d}u,{|\log (1-W)|\over c(t)}\in{\rm d}v\Big\}=0.
\end{equation}
Obviously, for $x_0>0$ small enough there exists $M=M(x_0)>0$ such
that
$$
0\leq -\log (1-x)-x \leq Mx^2,\qquad 0<x\leq x_0.
$$
This in combination with \eqref{aux1} gives
\begin{eqnarray*}
&&\hspace{-1.5cm}0\leq -\log \phi_t(z)-\sum_{k\geq 1}
\int_{[0,\,\infty)\times [0,\,\infty)} K(z,u,v,
k/t)\mmp\Big\{{|\log W|\over c(t)}\in{\rm d}u,\,
{|\log (1-W)|\over c(t)}\in{\rm d}v\Big\}\\
&\leq& M\sum_{k\geq 1}\Big(\int_{[0,\,\infty)\times [0,\,\infty)}
K(z,u,v, k/t)\mmp\Big\{{|\log W|\over c(t)}\in{\rm d}u,\, {|\log
(1-W)|\over c(t)} \in{\rm d}v\Big\}\Big)^2,
\end{eqnarray*}
for $t$ large enough, and thereupon
\begin{equation}\label{cf_conv}
\lit \Big(-\log \phi_t(z)-\sum_{k\geq 1} \int_{[0,\,\infty)\times
[0,\,\infty)} K(z,u,v, k/t)\mmp\Big\{{|\log W|\over c(t)}\in{\rm
d}u,\, {|\log (1-W)|\over c(t)}\in{\rm d}v\Big\}\Big)=0,
\end{equation}
for each $z>0$, by another appeal to \eqref{aux1}.

Conditions \eqref{0030} entail
\begin{equation}\label{0031}
t\mmp\bigg\{\bigg({|\log W|\over c(t)},\,{|\log (1-W)|\over
c(t)}\bigg)\in \cdot\bigg\} \ \overset{v}{\to} \ \mu_{\alpha,\,c},
\ \ t\to\infty
\end{equation}
and
\begin{equation}\label{00311}
t\mmp\bigg\{{|\log (1-W)|\over c(t)}\in \cdot\bigg\} \
\overset{v}{\to} \ \nu_{\alpha,\,c}, \ \ t\to\infty,
\end{equation}
where $\overset{v}{\to}$ denotes vague convergence of measures.
Observe that $\mu_{\alpha,\,c}$ is a measure concentrated on the
axes. This justifies the independence of the components of the
limiting vector in \eqref{joint convergence} as well as the
integration formula
\begin{equation}\label{integration}
\int_{[0,\,\infty)\times [0,\,\infty)}f(u,v)\mu_{\alpha,\,c}({\rm
d}u\times {\rm d}v)=\alpha\int_0^\infty f(u,0)u^{-\alpha-1}{\rm
d}u+\alpha c^{-1}\int_0^\infty f(0,v)v^{-\alpha-1}{\rm d}v
\end{equation}
which is valid provided the integrals are well-defined.

Relation \eqref{0031} entails
\begin{eqnarray}\label{vag_conv}
\mu^{(t)}({\rm d}x,{\rm d}u,{\rm d}v)&:=&\sum_{k\geq
1}\varepsilon_{k/t}({\rm d}x)\mmp\bigg\{{|\log W|\over c(t)}\in
{\rm
d}u,\,{|\log (1-W)|\over c(t)}\in {\rm d}v\bigg\}\nonumber\\
&\overset{v}{\to}& {\rm d}x \mu_{\alpha,\,c}({\rm d}u\times {\rm
d}v), \ \ t\to\infty,
\end{eqnarray}
where $\varepsilon_{k/t}$ is a probability measure concentrated at
$k/t$, and relation \eqref{00311} implies
\begin{equation}\label{vag_conv2}
\mu^{(t)}({\rm d}x,(0,\infty],{\rm d}v)\ \ \overset{v}{\to} \ \
{\rm d}x \mu_{\alpha,\,c}((0,\infty]\times {\rm d}v), \ \
t\to\infty.
\end{equation}
We use the representation
\begin{eqnarray*}
&&\hspace{-2cm}\sum_{k\geq 1} \int_{[0,\,\infty)\times
[0,\,\infty)} K(z,u,v, k/t)\mmp\bigg\{{|\log W|\over c(t)}\in{\rm
d}u,\, {|\log (1-W)|\over
c(t)}\in{\rm d}v\bigg\}\\
&&\hspace{3cm}=\int_{[0,\,\infty)\times[0,\,\infty)\times
[0,\,\infty)}K(z,u,v,x)\mu^{(t)}({\rm d}x,{\rm d}u,{\rm d}v)
\end{eqnarray*}
to show that
\begin{eqnarray}\label{Kconv}
&&\lit \int_{[0,\,\infty)\times[0,\,\infty)\times
[0,\,\infty)}K(z,u,v,x)\mu^{(t)}({\rm d}x,{\rm d}u,{\rm
d}v)\nonumber\\
&&\qquad\qquad=\int_{[0,\,\infty)\times[0,\,\infty)\times
[0,\,\infty)}K(z,u,v,x){\rm d}x\mu_{\alpha,\,c}({\rm d}u,{\rm
d}v),
\end{eqnarray}
for each $z>0$. The last relation does not follow from
\eqref{vag_conv} automatically, for, with $z$ fixed, the function
$K(z,\cdot)$ is not compactly supported on
$(0,\infty]\times(0,\infty]\times [0,\infty)$. To prove
\eqref{Kconv}, note that in the same space the function
$(u,v,x)\mapsto K(z,u,v,x)1_{\{u\geq 1\text{ or} \ x>u_n\}}$ does
possess a compact support for any fixed $z>0$ (see footnote on
p.~ \pageref{foot1}). Hence it is sufficient to check that
\begin{eqnarray}\label{Kconv2}
&&\lit \int K(z,u,v,x)1_{\{u<1,\, x\leq u_n\}}\mu^{(t)}({\rm
d}x,{\rm d}u,{\rm d}v)\nonumber\\
&&\qquad\qquad= \int K(z,u,v,x)1_{\{u <
1,\,x\leq u_n\}}{\rm d}x\mu_{\alpha,\,c}({\rm d}u,{\rm d}v).
\end{eqnarray}
To simplify notation we only prove this for $n=1$\footnote{The
case of general $n$ can be settled by splitting the domain of
integration over variable $x$ into segments $[u_{i-1},u_i]$ and
checking convergence of integrals over each segment.}. With $z$
fixed, the function
$$
\hat{K}(z,u,v,x):=
\left\{
\begin{array}{ll}
    \frac{1-\exp\big(-z(u\gamma_1+\gamma g(x,v))\big)}{u\gamma_1+\gamma g(x,v)},& u\gamma_1+\gamma g(x,v)\neq 0,\\
    z,& u\gamma_1+\gamma g(x,v)=0
\end{array}\right.
$$
is continuous and bounded which implies that \eqref{Kconv2}
follows if we  prove that
\begin{equation}\label{wconv_meas1}
u1_{\{u<1,\,x\leq u_1\}}\mu^{(t)}({\rm d}x,{\rm d}u,{\rm d}v) \
\Rightarrow \ u1_{\{u<1,\,x\leq u_1\}}{\rm
d}x\mu_{\alpha,\,c}({\rm d}u,{\rm d}v), \ \ t\to\infty,
\end{equation}
and
\begin{equation}\label{wconv_meas2}
g(x,v)1_{\{u<1,\,x\leq u_1\}}\mu^{(t)}({\rm d}x,{\rm d}u,{\rm d}v)
\ \Rightarrow \ g(x,v)1_{\{u<1,\,x\leq u_1\}}{\rm
d}x\mu_{\alpha,\,c}({\rm d}u,{\rm d}v), \ \ t\to\infty.
\end{equation}
Vague convergence in \eqref{wconv_meas1} and \eqref{wconv_meas2}
is a consequence of \eqref{vag_conv}. Indeed, if $f$ is continuous and
compactly supported, the same is true for $u1_{\{u<1,\,x\leq u_1\}}f$ and
$g(x,v)1_{\{u<1,\,x\leq u_1\}}f$. According to Theorem 30.8 (ii)
\cite{Bauer}, the vague convergence can be strengthened to the weak
convergence if we prove that
\begin{equation}\label{full_meas1}
\lit \int u1_{\{u<1,\,x\leq u_1\}}\mu^{(t)}({\rm d}x,{\rm d}u,{\rm
d}v)= \int u1_{\{u<1,\,x\leq u_1\}}{\rm d}x\mu_{\alpha,\,c}({\rm
d}u,{\rm d}v)
\end{equation}
and
\begin{equation}\label{full_meas2}
\lit \int g(x,v)1_{\{u<1,\,x\leq u_1\}}\mu^{(t)}({\rm d}x,{\rm
d}u,{\rm d}v)= \int g(x,v)1_{\{u<1,\,x\leq u_1\}}{\rm
d}x\mu_{\alpha,\,c}({\rm d}u,{\rm d}v).
\end{equation}

The prelimiting expression  in \eqref{full_meas1} can be
written in the form
\begin{eqnarray*}
\sum_{k=1}^{[u_1 t]}\int_{[0,1)\times
(0,\,\infty)}u\mmp\Big\{{|\log W|\over c(t)}\in{\rm d}u,\, {|\log
(1-W)|\over c(t)} \in{\rm d}v\Big\}=\frac{[u_1 t]}{c(t)}\me |\log
W|1_{\{|\log W|< c(t)\}},
\end{eqnarray*}
which, by \eqref{0030}, converges, as $t\to\infty$, to
$\alpha(1-\alpha)^{-1}u_1$, and this is equal to the value of the
right-hand side of \eqref{full_meas1}. We now prove
\eqref{full_meas2}. Set $\hat{g}(x,v):=g(x,v)1_{\{x\leq u_1\}}$
and note that $\hat{g}\in C_K([0,\infty)\times(0,\infty])$. As the
function $\hat{g}(x,v)1_{\{u\geq 1\}}$ has compact support,
equality \eqref{full_meas2} is justified if we  verify that
\begin{equation}\label{full_meas3} \lit \int
\hat{g}(x,v)\mu^{(t)}({\rm d}x,{\rm d}u,{\rm d}v)= \int
\hat{g}(x,v){\rm d}x\mu_{\alpha,\,c}({\rm d}u,{\rm d}v).
\end{equation}
To this end, note that an equivalent form of \eqref{full_meas3} is
\begin{eqnarray*}
\lit \int \hat{g}(x,v)\mu^{(t)}({\rm d}x,(0,\infty],{\rm d}v)=
\int \hat{g}(x,v){\rm d}x\mu_{\alpha,\,c}((0,\infty],{\rm d}v)
\end{eqnarray*}
which holds true by \eqref{vag_conv2}. Thus, relation
\eqref{Kconv} has been proved.

We use integration formula \eqref{integration} and the equality
$g(x,0)=0$ to conclude that
\begin{eqnarray*}
&&\int_{[0,\,\infty)\times[0,\,\infty)\times
[0,\,\infty)}K(z,u,v,x){\rm d}x\mu_{\alpha,\,c}({\rm d}u,{\rm
d}v)\\
&&\qquad= \alpha\int_0^\infty \int_0 ^\infty \Big(1-\exp\Big(-zu
\sum_{i=1}^n \gamma_i1_{\{x\leq u_i\}}\Big)\Big){\rm d}x
u^{-\alpha-1}{\rm d}u\\
&&\qquad\quad+\alpha c^{-1}\int_0^\infty\int_0^\infty
\Big(1-\exp(-z\gamma g(x,v))\Big){\rm d}x v^{-\alpha-1}{\rm
d}v\\&&\qquad\qquad= -\log \me \exp\big(-z \sum_{i=1}^n\gamma_i
X_\alpha(u_i)\big) -\log \me \exp\big(-z\gamma \sum_m
g(t_m,\,j_m)\big)
\end{eqnarray*}
for each $z>0$, which in combination with \eqref{Kconv} and
\eqref{cf_conv} gives \eqref{fin_dim_joint} and thereupon joint
convergence \eqref{joint convergence}.

Left with determining the law of $R_{\alpha,\,c}(u)$, fix
$\delta>0$, put
$$R^{(\delta)}_{\alpha,\,c}(u):=\sum_{k\geq 0}1_{\{S_k\leq u<S_k+\eta_{k+1}, \eta_{k+1}>\delta\}}$$ and use the equality
$$\me e^{-zR^{(\delta)}_{\alpha,\,c}(u)}=\me \exp\bigg(-\int_{[0,\,\infty)}\int_{(\delta,\,\infty]}\big(1-e^{-z1_{\{X_\alpha(s)\leq u<X_\alpha(s)+y\}}}\big)
{\rm d}s\nu_{\alpha,\,c}({\rm d}y)\bigg), \ \ z\geq 0,$$ which is
a particular case of the formula given in   \cite{Res}, p.~136, along
with simple manipulations to obtain
$$\me e^{-zR^{(\delta)}_{\alpha,\,c}(u)}=\me \exp\bigg(-c^{-1}\int_{[0,\,u]}\big((u-s)\vee \delta\big)^{-\alpha}{\rm
d}X^\leftarrow_\alpha(s)(1-e^{-z})\bigg), \ \ z\geq 0.$$ Passing
to the limit $\delta\downarrow 0$ and using the continuity theorem
for Laplace transforms, we see that
$$\me e^{-zR_{\alpha,\,c}(u)}=\me \exp\bigg(-c^{-1}\int_{[0,\,u]}(u-s)^{-\alpha}{\rm
d}X^\leftarrow_\alpha(s)(1-e^{-z})\bigg), \ \ z\geq 0.$$ Thus the
law of $R_{\alpha,\,c}(u)$ is mixed Poisson with (random)
parameter $c^{-1}\int_{[0,\,u]}(u-s)^{-\alpha}{\rm
d}X^\leftarrow_\alpha(s)$. It is shown in \cite{IMM} that the law
of the latter integral is standard exponential. Therefore the law
of $R_{\alpha,\,c}(u)$ is geometric with the success probability
$c(c+1)^{-1}$, as asserted. The proof of Theorem \ref{main} is
complete.

\section{Proof of Theorem \ref{main2}}\label{proof_main2}

By Lemma \ref{depois}, relation \eqref{0010} follows if we
prove that
\begin{equation}\label{003}
{1-F(t)\over 1-G(t)}L(e^{ut}) \ \overset{{\rm f.d.}}{\Rightarrow}
\ W_{\alpha,\,\beta}(u), \ \ t\to\infty,
\end{equation}
where $F(t):=\mmp\{|\log W|\leq t\}$ and
$G(t):=\mmp\{|\log(1-W)|\leq t\}$, $t\geq 0$.

The first condition in \eqref{002} implies $\me |\log W|=\infty$.
Hence, in view of Lemma \ref{red}, it suffices to check that
\begin{equation}\label{004}
{1-F(t)\over 1-G(t)}\sum_{k\geq 0}1_{\{S_k\leq
ut<S_k+\eta_{k+1}\}} \ \overset{{\rm f.d.}}{\Rightarrow} \
W_{\alpha,\,\beta}(u), \ \ t\to\infty.
\end{equation}
By Theorem 2.10 \cite{IMM} we have
$${1-F(t)\over 1-G(t)}\sum_{k\geq 0}\me\big(1_{\{S_k\leq
ut<S_k+\eta_{k+1}\}}\big|S_k\big)={1-F(t)\over 1-G(t)}\sum_{k\geq
0}\big(1-G(ut-S_k)\big)1_{\{S_k\leq ut\}} \ \overset{{\rm
f.d.}}{\Rightarrow} \ W_{\alpha,\,\beta}(u),$$ as $t\to\infty$.
The validity of the equality
$$
\me \bigg(\sum_{k\geq 0}\big(1_{\{S_k\leq t<S_k+\eta_{k+1}\}}-\me
\big(1_{\{S_k\leq t<S_k+\eta_{k+1}\}}\big|S_k\big)
\big)\bigg)^2=\int_{[0,\,t]}G(y)(1-G(y)){\rm d}U(y)
$$
is easily justified. Further, using the lines of proving
Lemma 5.2 \cite{IMM}, one can show that
$$
\int_{[0,\,t]}G(y)(1-G(y)){\rm d}U(y) \ \sim \ {\rm
const}\,{1-G(t)\over 1-F(t)}, \ \ t\to\infty.
$$
This, Markov's inequality and the conditions of the theorem
 imply
$$
{1-F(t)\over 1-G(t)}\sum_{k\geq 0}\big(1_{\{S_k\leq
t<S_k+\eta_{k+1}\}}-\me \big(1_{\{S_k\leq
t<S_k+\eta_{k+1}\}}\big|S_k\big) \big) \ \tp \ 0, \ \
t\to\infty.
$$
Noting that the first multiplier is regularly varying at $\infty$
and using the Cram\'{e}r-Wold device we arrive at
$${1-F(t)\over 1-G(t)}\sum_{k\geq 0}\big(1_{\{S_k\leq
ut<S_k+\eta_{k+1}\}}-\me \big(1_{\{S_k\leq
ut<S_k+\eta_{k+1}\}}\big|S_k\big) \big) \ \overset{{\rm
f.d.}}{\Rightarrow} \ 0, \ \ t\to\infty$$ thereby proving
\eqref{004} and thereupon \eqref{003}. The proof of Theorem
\ref{main2} is complete.

\section{Proof of Theorem \ref{main3}}\label{proof_main3}

Recall the notation $G(t)=\mmp\{|\log (1-W)|\leq t\}$,
$\eta_n=|\log (1-W_n)|$, $n\in\mn$, and that
$\big(S_n\big)_{n\in\mn_0}$ stands for a zero-delayed standard random
walk with jumps $|\log W_k|$. Set also
$$q(t):=\sqrt{\mu^{-1}\int_0^t \big(1-G(y)\big){\rm d}y}.$$
\begin{assertion}\label{mart}
If $\mu=\me |\log W|<\infty$ and condition \eqref{domain2} holds,
then
$$
{\sum_{k\geq 0}1_{\{S_k\leq ut<S_k+\eta_{k+1}\}}-\sum_{k\geq
0}\me\big(1_{\{S_k\leq ut<S_k+\eta_{k+1}\}}\big|S_k\big)\over
q(t)} \ \overset{{\rm f.\,d.}}{\Rightarrow} \ V(u), \ \
t\to\infty.$$
\end{assertion}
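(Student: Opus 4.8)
The plan is to identify the numerator as a sum of martingale differences with respect to the natural filtration of the random walk $\big(S_k\big)$ and to apply a martingale functional central limit theorem. First I would fix the selection $0<u_1<\ldots<u_n$ and, for each $j$, write
$$D_k^{(j)}:=1_{\{S_k\leq u_j t<S_k+\eta_{k+1}\}}-\me\big(1_{\{S_k\leq u_j t<S_k+\eta_{k+1}\}}\big|S_k\big),$$
so that $\big(D_k^{(j)}\big)_{k\geq 0}$ is, for each fixed $t$, a sequence of martingale differences relative to $\mathcal{F}_k:=\sigma\big(W_1,\ldots,W_k,W_{k+1}\big)$ — indeed, conditionally on $S_k$, the variable $\eta_{k+1}=|\log(1-W_{k+1})|$ is independent of $\mathcal{F}_k$ (beyond what is needed to read off $S_k$), and the centering is precisely the $\mathcal{F}_k$-conditional mean. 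The target process $V(\cdot)$ is centered Gaussian with covariance $\me V(t)V(s)=t^{1-\beta}-(t-s)^{1-\beta}$ for $s\leq t$; to match it I would compute, via the key renewal theorem together with condition \eqref{domain2} ($1-G(y)\sim y^{-\beta}\widehat{\ell}(y)$) and $\mu<\infty$, the asymptotics of the predictable quadratic variation.

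The core computation is the conditional variance. Using that the $D_k^{(j)}$ are conditionally centered indicators one gets, as already recorded in the proof of Theorem \ref{main2},
$$\me\bigg(\sum_{k\geq 0}D_k^{(j)}\bigg)^2=\int_{[0,\,u_j t]}G(y)\big(1-G(y)\big)\,{\rm d}U(y),$$
and by the elementary renewal theorem $U({\rm d}y)\sim \mu^{-1}{\rm d}y$, so this behaves like $\mu^{-1}\int_0^{u_j t}\big(1-G(y)\big){\rm d}y\sim \mu^{-1}(u_j t)^{1-\beta}\widehat{\ell}(t)/(1-\beta)=q(u_j t)^2/q(t)^2\cdot q(t)^2$ — more cleanly, it is asymptotically $q(u_jt)^2$, hence after dividing by $q(t)^2$ it tends to $u_j^{1-\beta}$ (using regular variation of $q$ with index $(1-\beta)/2$). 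For the cross term at levels $u_i<u_j$ one splits $1_{\{S_k\leq u_jt<S_k+\eta_{k+1}\}}$ according to whether $S_k\leq u_it$ or $u_it<S_k\leq u_jt$; on the first event the two indicators at the two levels cannot both equal $1$ unless $\eta_{k+1}>(u_j-u_i)t$, whose contribution is asymptotically negligible (it is controlled by $\int G(y)(1-G((u_j-u_i)t+\cdots))\,{\rm d}U(y)$ and vanishes after normalization by $q(t)^2$ since $1-G((u_j-u_i)t)=o(1)$), so the covariance is governed by the range $S_k\in(u_it,u_jt]$, giving $\big(q(u_jt)^2-q(u_it)^2\big)/q(t)^2\to u_j^{1-\beta}-(u_j-u_i)^{1-\beta}$. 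This reproduces $\me V(u_i)V(u_j)$.

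Having the covariance, I would invoke a martingale central limit theorem (e.g. the multivariate Lindeberg–Feller martingale CLT, Hall–Heyde Theorem 3.2, or equivalently a martingale invariance principle) to upgrade the convergence of quadratic variations to the joint weak convergence of $\big(q(t)^{-1}\sum_k D_k^{(1)},\ldots,q(t)^{-1}\sum_k D_k^{(n)}\big)$ to $\big(V(u_1),\ldots,V(u_n)\big)$; here one checks the conditional Lindeberg condition trivially, since each $|D_k^{(j)}|\leq 1$ and the number of nonzero summands up to level $u_jt$ is $O(t)$ while the normalization $q(t)$ grows like $t^{(1-\beta)/2}$ times a slowly varying factor, so the maximal jump is $o(q(t))$ — strictly, one notes $\sum_k \me\big((D_k^{(j)})^2 1_{\{|D_k^{(j)}|>\varepsilon q(t)\}}\big)=0$ for $t$ large. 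I expect the main obstacle to be the cross-covariance bookkeeping at distinct time levels $u_i<u_j$: one must show carefully, via the key renewal theorem applied to a directly Riemann integrable dominating function and the fact that $1-G$ is regularly varying with negative index, that the ``diagonal'' overlap term $\int G(y)\big(1-G\big((u_j-u_i)t + (\text{remainder})\big)\big)\,{\rm d}U(y)$, normalized by $q(t)^2$, is asymptotically negligible, and that the remaining contribution produces exactly $q(u_jt)^2-q(u_it)^2$ up to $o\big(q(t)^2\big)$; the slowly varying factor $\widehat\ell$ must be handled uniformly on compact $u$-ranges using the uniform convergence theorem for regularly varying functions.
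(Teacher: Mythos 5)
Your overall strategy (martingale differences plus a martingale CLT, exactly as in the paper, which uses the Cram\'{e}r--Wold device and Corollary 3.1 of Hall--Heyde) is the right one, and your Lindeberg check is fine since $|D_k^{(j)}|\leq 1$ while $q(t)\to\infty$. But there is a genuine gap in how you verify the variance hypothesis of the martingale CLT. That theorem requires convergence \emph{in probability} of the \emph{conditional} (predictable) quadratic variation $\sum_k \me\big((D_k^{(j)})^2\,\big|\,\mathcal{F}_k\big)$, which is a \emph{random} functional of the path $(S_k)$, namely $q(t)^{-2}\sum_{k}1_{\{S_k\leq u_jt\}}\big(1-G(u_jt-S_k)\big)G(u_jt-S_k)$. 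You only compute its expectation, $\int_{[0,\,u_jt]}G(y)(1-G(y))\,{\rm d}U(y)$, via $U({\rm d}y)\sim\mu^{-1}{\rm d}y$. Convergence of the mean of a nonnegative random sequence to a constant does not give convergence in probability, and this is precisely where the real work lies: the paper establishes the in-probability (in fact a.s.) convergence of each such random sum by combining the strong law of large numbers for the renewal process $\nu(ty)/(\mu^{-1}t)\to y$, the uniform convergence theorem for the regularly varying tail $1-G$, and a renewal estimate (Lemma \ref{dri3}) to control the contribution of $S_k$ near the endpoint $u_jt$. Your proposal is silent on this step.

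Your cross-covariance bookkeeping is also incorrect as stated. For $u_i<u_j$ the product $D_k^{(i)}D_k^{(j)}$ vanishes unless $S_k\leq u_it$, so the range $S_k\in(u_it,u_jt]$ contributes \emph{nothing}; and on $\{S_k\leq u_it\}$ one has $\me\big(D_k^{(i)}D_k^{(j)}\big|\mathcal{F}_k\big)=\big(1-G(u_jt-S_k)\big)G(u_it-S_k)$, i.e.\ the ``overlap'' term you dismiss as negligible is exactly the source of the covariance: summed over the $O(t)$ renewal epochs in $[0,u_it]$ it is of order $t(1-G(t))\asymp q^2(t)$ and yields $\mu^{-1}\int_{(u_j-u_i)t}^{u_jt}(1-G(y))\,{\rm d}y\big/q^2(t)\to u_j^{1-\beta}-(u_j-u_i)^{1-\beta}$. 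Relatedly, your displayed limit $\big(q(u_jt)^2-q(u_it)^2\big)/q(t)^2\to u_j^{1-\beta}-(u_j-u_i)^{1-\beta}$ is internally inconsistent (the left side tends to $u_j^{1-\beta}-u_i^{1-\beta}$); the correct numerator is $q(u_jt)^2-q((u_j-u_i)t)^2$. A minor further point: with your filtration $\sigma(W_1,\ldots,W_{k+1})$ the summand $D_k^{(j)}$ is measurable rather than a martingale difference; one should condition on $\sigma(W_1,\ldots,W_k)$ as the paper does.
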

\begin{proof}
We only prove weak convergence of two-dimensional
distributions. The general case is unwieldy and does not require
new ideas.

Fix $0<u_1<u_2$. According to the Cram\'{e}r-Wold device, it suffices
to prove that, for any $\gamma_1, \gamma_2\in\mr$,
\begin{equation}\label{0021}
{\sum_{j=1}^2 \gamma_j \sum_{k\geq 0}1_{\{S_k\leq
u_jt\}}\big(1_{\{S_k+\eta_{k+1}>u_jt\}}-(1-G(u_jt-S_k))\big)\over
q(t)} \ \dod \ \gamma_1V(u_1)+\gamma_2V(u_2),
\end{equation}
as $t\to\infty$. Note that
$\gamma_1V(u_1)+\gamma_2V(u_2)$ is  a normally distributed random variable  with zero mean
and variance $ \gamma_1^2 u_1^{1-\beta}+\gamma_2^2
u_2^{1-\beta}+2\gamma_1\gamma_2(u_2^{1-\beta}-(u_2-u_1)^{1-\beta})$.
Introduce the $\sigma$-algebras $\mathcal{F}_0:=\{\Omega,
\oslash\}$, $\mathcal{F}_k:=\sigma\big(W_1,\ldots, W_k\big)$,
$k\in\mn$ and observe that
$$\me \bigg(\sum_{j=1}^2 \gamma_j 1_{\{S_k\leq
u_jt\}}\big(1_{\{S_k+\eta_{k+1}>u_jt\}}-(1-G(u_jt-S_k))\big)\bigg|\mathcal{F}_k
\bigg)=0.$$ Thus, in order to prove \eqref{0021}, one may use a
martingale central limit theorem (Corollary 3.1 \cite{Hall}),
according to which it suffices to verify that
\begin{equation}\label{123}
\sum_{k\geq 0}\me \big(X_{tk}^2|\mathcal{F}_k\big) \ \tp \
\gamma_1^2 u_1^{1-\beta}+\gamma_2^2
u_2^{1-\beta}+2\gamma_1\gamma_2(u_2^{1-\beta}-(u_2-u_1)^{1-\beta}),
\ \ t\to\infty,
\end{equation}
where
$$
X_{tk}:={\sum_{j=1}^2 \gamma_j 1_{\{S_k\leq
u_jt\}}\big(1_{\{S_k+\eta_{k+1}>u_jt\}}-(1-G(u_jt-S_k))\big)\over
q(t)},
$$
and
\begin{equation}\label{124}
\sum_{k\geq 0}\me
\big(X_{tk}^21_{\{|X_{tk}|>\varepsilon\}}|\mathcal{F}_k\big) \ \tp
\ 0, \ \ t\to\infty,
\end{equation}
for all $\varepsilon>0$, hereafter. The inequality $|X_{tk}|\leq
\big(|\gamma_1|+|\gamma_2|\big)/q(t)$ reveals that under our
conditions relation \eqref{124} follows from \eqref{123}.

It can be checked that
\begin{eqnarray*} \sum_{k\geq 0}\me
\big(X_{tk}^2|\mathcal{F}_k\big)&=&{\sum_{j=1}^2\gamma_j^2\sum_{k\geq
0}1_{\{S_k\leq u_jt\}}(1-G(u_jt-S_k))G(u_jt-S_k)\over q^2(t)}\\&+&
{2\gamma_1\gamma_2 \sum_{k\geq 0}1_{\{S_k\leq
u_1t\}}(1-G(u_2t-S_k))G(u_1t-S_k)\over q^2(t)}.
\end{eqnarray*}
Now we  prove that
\begin{equation}\label{0024}
{\sum_{k\geq 0}1_{\{S_k\leq u_1t\}}(1-G(u_2t-S_k))\over
q^2(t)}={\int_{[0,\,u_1]}\big(1-G(t(u_2-y))\big){\rm
d}\nu(ty)\over q^2(t)} \ \to \ u_2^{1-\beta}-(u_2-u_1)^{1-\beta}
\end{equation}
almost surely, as $t\to\infty$. By the strong law of large numbers
for the process $\big(\nu(t)\big)$, the relation
$$\lit {\nu(ty)\over \mu^{-1}t} =y$$ holds almost surely, for all
$y\in [0,u_1]$. In addition,
$$\lit {1-G(t(u_2-y))\over 1-G(t)}=(u_2-y)^{-\beta}$$ uniformly in $y\in [0,u_1]$. Hence\footnote{A similar relation in a more general situation
can be found in Lemma A.6 \cite{Iks2}.}
$$\lit {\int_{[0,\,u_1]}\big(1-G(t(u_2-y))\big){\rm
d}\nu(ty)\over \mu^{-1}t(1-G(t))}=\int_0^{u_1}(u_2-y)^{-\beta}{\rm
d}y=(1-\beta)^{-1}\big(u_2^{1-\beta}-(u_2-u_1)^{1-\beta}\big)$$
almost surely. It remains to note that $\int_0^t
\big(1-G(y)\big){\rm d}y \sim (1-\beta)^{-1}t(1-G(t))$.

The next step is to verify that
\begin{equation}\label{0022}
\lit
{\int_{[0,\,u_1]}\big(1-G(t(u_2-y))\big)\big(1-G(t(u_1-y))\big){\rm
d}\nu(ty)\over q^2(t)}= 0
\end{equation}
almost surely. To this end, fix $\varepsilon\in (0,u_1)$ and use
the monotonicity of $1-G(t)$ to infer
\begin{eqnarray*}
&&
{\int_{[0,\,u_1-\varepsilon]}\big(1-G(t(u_2-y))\big)\big(1-G(t(u_1-y))\big){\rm
d}\nu(ty)\over q^2(t)}\\
&&\qquad\qquad \leq {\big(1-G(\varepsilon
t)\big)\int_{[0,\,u_1-\varepsilon]}\big(1-G(t(u_2-y))\big){\rm
d}\nu(ty)\over q^2(t)}.
\end{eqnarray*}
Relation \eqref{0024}, with $u_1$ replaced by $u_1-\varepsilon$,
allows us to conclude that the right-hand side of the last
inequality tends to zero almost surely, as $t\to\infty$. Further
$${\int_{(u_1-\varepsilon,\,u_1]}\big(1-G(t(u_2-y))\big)\big(1-G(t(u_1-y))\big){\rm
d}\nu(ty)\over q^2(t)}\leq
{\int_{(u_1-\varepsilon,\,u_1]}\big(1-G(t(u_2-y))\big){\rm
d}\nu(ty)\over q^2(t)},$$ and, as $t\to\infty$, the limit of the
right-hand side equals
$(u_2-u_1+\varepsilon)^{1-\beta}-(u_2-u_1)^{1-\beta}$. Sending now
$\varepsilon$ to zero completes the proof of \eqref{0022}. We
have thus proved that $$\lit {\sum_{k\geq 0}1_{\{S_k\leq
u_1t\}}\big(1-G(u_2t-S_k)\big)G(u_1t-S_k)\over
q^2(t)}=u_2^{1-\beta}-(u_2-u_1)^{1-\beta}$$ almost surely.

Let us show that
\begin{equation}\label{0026}
{\sum_{k\geq 0}1_{\{S_k\leq ut\}}\big(1-G(ut-S_k)\big)\over
q^2(t)}={\int_{[0,\,u]}\big(1-G(t(u-y))\big){\rm d}\nu(ty)\over
q^2(t)} \ \tp \ u^{1-\beta},
\end{equation}
as $t\to\infty$. Notice first that, for any $\varepsilon\in
(0,u)$,
$$\lit {\int_{[0,\,u-\varepsilon]}\big(1-G(t(u-y))\big){\rm d}\nu(ty)\over q^2(t)}=u^{1-\beta}-\varepsilon^{1-\beta}$$ almost
surely, which follows along the lines of the proof of
\eqref{0024}. Since the right-hand side of the latter equality
tends to $u^{1-\beta}$, as $\varepsilon\downarrow 0$, the proof of
\eqref{0026} will now be completed by showing that
$$
\underset{\varepsilon\downarrow
0}{\lim}\underset{t\to\infty}{\overline{\lim}}\,\mmp\bigg\{
{\int_{(u-\varepsilon, \, u]}\big(1-G(t(u-y))\big){\rm
d}\nu(ty)\over q^2(t)}>\delta\bigg\}=0,
$$
for any $\delta>0$. By Markov's inequality, the latter relation
holds true, if we can check that
\begin{equation}\label{0025}
\underset{\varepsilon\downarrow
0}{\lim}\underset{t\to\infty}{\overline{\lim}}\,
{\int_{(u-\varepsilon, \, u]}\big(1-G(t(u-y))\big){\rm
d}U(ty)\over q^2(t)}=0.
\end{equation}
Using Lemma \ref{dri3} and then the regular variation of $1-G(t)$
give
$$\int_{((u-\varepsilon)t, \, ut]}\big(1-G(ut-y)\big){\rm d}U(y) \
\sim \ \mu^{-1}\int_0^{\varepsilon t} \big(1-G(y)\big){\rm d}y \
\sim \ \varepsilon^{1-\beta}q^2(t), \ \ t\to\infty,$$ which proves
\eqref{0025}. Therefore, relation \eqref{0026} holds true.

Finally, an argument similar to that used to establish
\eqref{0022} (or, even simpler, analyzing the asymptotics of
expectation) enables us to check that
\begin{equation}\label{0028}
{\sum_{k\geq 0}1_{\{S_k\leq ut\}}\big(1-G(ut-S_k)\big)^2\over
q^2(t)}={\int_{[0,\,u]}\big(1-G(t(u-y))\big)^2{\rm d}\nu(ty)\over
q^2(t)} \ \tp \ 0,
\end{equation}
as $t\to\infty$. Now convergence in probability stated in
\eqref{123} is a consequence of \eqref{0024}, \eqref{0022},
\eqref{0026} and \eqref{0028} (the last two relations should be
used separately for $u=u_1$ and $u=u_2$).
\end{proof}

Now we are ready to prove Theorem \ref{main3}. According to
Proposition \ref{mart}, conditions \eqref{domain2} and $\mu=\me
|\log W|<\infty$ ensure (it is not necessary to assume here that the
distribution of $|\log W|$ belongs to the domain of attraction of
a stable law)
\begin{equation}\label{0012}
{\sum_{k\geq 0}1_{\{S_k\leq ut<S_k+\eta_{k+1}\}}-\sum_{k\geq
0}\me\big(1_{\{S_k\leq ut<S_k+\eta_{k+1}\}}\big|S_k\big)\over
q(t)} \ \overset{{\rm f.d.}}{\Rightarrow} \ V(u), \ \ t\to\infty.
\end{equation}
Assuming further that the assumptions of either of parts (a)
through (c) are in force an application of Theorem 2.7 \cite{IMM}
yields
\begin{eqnarray}\label{0015}
&&{\sum_{k\geq 0}\me\big(1_{\{S_k\leq
ut<S_k+\eta_{k+1}\}}\big|S_k\big)-\mu^{-1}\int_0^{ut}\big(1-G(y)\big){\rm
d}y \over g(t)}\nonumber\\&&\qquad = {\sum_{k\geq
0}\big(1-G(ut-S_k)\big)1_{\{S_k\leq ut\}}-q^2(ut)\over
g(t)}\quad\overset{{\rm f.d.}}{\Rightarrow} \quad
W_{\alpha,\,\beta}(u), \ \ t\to\infty,
\end{eqnarray}
where $\alpha=2$ corresponds to cases (a) and (b), and
$g(t)=\sqrt{\sigma^2\mu^{-3}t}\big(1-G(t)\big)$ in case (a) and
$g(t)=\mu^{-1-1/\alpha}c(t)\big(1-G(t)\big)$ in cases (b) and (c).

\noindent {\sc Cases} (a), (b1) {\sc and} (c1). Our purpose is to
demonstrate that
\begin{equation*}
{L(e^{ut})-q^2(ut) \over q(t)} \ \overset{{\rm f.d.}}{\Rightarrow
} \ V(u), \ \ t\to\infty
\end{equation*}
which is, by Lemma \ref{depois}, sufficient for proving Theorem \ref{main3} in the
cases under consideration.

The assumptions of Theorem \ref{main3} imply $\mu<\infty$ and
$\lit q(t)=\infty$. By Lemma \ref{red2}, the desired convergence
follows if we prove that
\begin{equation*}
{\sum_{k\geq 0}1_{\{S_k\leq ut<S_k+\eta_{k+1}\}}-q^2(ut)\over
q(t)} \ \overset{{\rm f.d.}}{\Rightarrow} \ V(u), \ \ t\to\infty
\end{equation*}
which, in its turn, is a consequence of \eqref{0012} and
\eqref{0015} if we still verify
\begin{equation}\label{0016}
\lit g(t)/q(t)=0.
\end{equation}
To this end, note first that, in view of Proposition 1.5.8
\cite{BGT},
\begin{equation*}
q^2(t) \ \sim \ {\rm const}\,t^{1-\beta}\widehat{\ell}(t), \ \
t\to\infty.
\end{equation*}
In case (a) we have $g^2(t) \sim {\rm
const}\,t^{1-2\beta}(\widehat{\ell}(t))^2$, $t\to\infty$ which
implies \eqref{0016} (note that $\lit \widehat{\ell}(t)=0$ when
$\beta=0$). In case (b1), $g^2(t) \sim {\rm
const}\,t^{1-2\beta}(\ell^\ast(t)\widehat{\ell}(t))^2$,
$t\to\infty$, and the validity of \eqref{0016} is secured by
\eqref{555}. Finally, in case (c1), $g^2(t) \sim {\rm
const}\,t^{2/\alpha-2\beta}(\ell^\ast(t)\widehat{\ell}(t))^2$,
$t\to\infty$, and \eqref{0016} is valid in view of
\eqref{555555}.

\noindent {\sc Cases} (b2) {\sc and} (c2). The previous argument
allows us to conclude that, first, it suffices to prove that
\begin{equation*}
{L(e^{ut})-q^2(ut) \over g(t)} \ \overset{{\rm f.d.}}{\Rightarrow}
\ W_{\alpha,\beta}(u), \ \ t\to\infty,
\end{equation*}
and second, the latter relation is a consequence of the convergence
\begin{equation}\label{0044}
{\sum_{k\geq 0}1_{\{S_k\leq ut<S_k+\eta_{k+1}\}}-q^2(ut)\over
g(t)} \ \overset{{\rm f.d.}}{\Rightarrow} \ W_{\alpha,\,\beta}(u),
\ \ t\to\infty.
\end{equation}

Relation \eqref{0044} follows from \eqref{0012} and \eqref{0015}
if we  show that
\begin{equation}\label{0020}
\lit g(t)/q(t)=\infty.
\end{equation}
We only treat case (c2), since the analysis of case (b2) requires
similar arguments. By the assumptions of the theorem, $g^2(t) \sim {\rm
const}\,
t^{-2\beta+2/\alpha}\big(\ell^\ast(t)\widehat{\ell}(t)\big)^2$,
$t\to\infty$. Consequently, relation \eqref{0020} holds
automatically if $\beta\in [0,\,2/\alpha-1)$ and is secured by
equality \eqref{0011} if $\beta=2/\alpha-1$.  The proof of Theorem
\ref{main3} is complete.

\section{Appendix}

Lemma \ref{dri} is used in the proofs of Lemma \ref{red} and Lemma
\ref{depois}.
\begin{lemma}\label{dri}
Let $\theta$ be a random variable taking values in $(0,1]$. Then
the functions $g_0(y):=\me \exp(-e^y\theta)-\me\exp(-2e^y\theta)$
and $g_4(y):=e^y\me\theta\exp(-e^y \theta)$ are directly Riemann
integrable on $\mr$, the function
$g_1(y):=\me\big(1-\exp\big(-e^y\theta\big)\big)$ is directly
Riemann integrable on the halfline $(-\infty, 0]$, and the
functions
$g_2(y):=\me\exp\big(-e^y\theta\big)1_{\{\theta>e^{-y}\}}$ and
$g_3(y):=\me \big(1-\exp(-e^y\theta)\big)1_{\{\theta\leq
e^{-y}\}}$ are directly Riemann integrable on the halfline
$[0,\infty)$.
\end{lemma}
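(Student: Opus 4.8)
The plan is to deduce direct Riemann integrability (dRi) of each of the five functions from two standard sufficient conditions: \textbf{(C1)} a non-negative, monotone, Lebesgue-integrable function on a half-line is dRi; and \textbf{(C2)} a non-negative function that is Riemann integrable on every compact interval and satisfies $\sum_{n}\sup_{n\le y\le n+1}g(y)<\infty$ is dRi. As a preliminary step I would record, via dominated convergence, that the integrands defining $g_0$, $g_1$, $g_4$ are continuous in $y$ and uniformly bounded (by $1/4$, $1$, $e^{-1}$ respectively), so $g_0$, $g_1$, $g_4$ are continuous; while $g_2$, $g_3$ are bounded and have discontinuities only at the (at most countably many) points $y$ for which $e^{-y}$ is an atom of the law of $\theta$, hence are Riemann integrable on every compact interval. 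Thus in every case only the growth or monotonicity hypothesis remains to be checked.

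Two of the functions reduce quickly. For $g_1$ on $(-\infty,0]$, $y\mapsto 1-e^{-e^y\theta}$ is non-decreasing for each fixed $\theta$, so $g_1$ is non-decreasing, and $g_1(y)\le\me[e^y\theta]\le e^y$ gives integrability at $-\infty$; (C1) applies. For $g_0$ I would invoke the inequality $1-e^{-a}\le a$ ($a\ge0$), equivalently $e^{-a}(1-e^{-a})\le ae^{-a}$, with $a=e^y\theta$, to obtain $0\le g_0(y)\le g_4(y)$ on $\mr$, so that $\sum_n\sup_{[n,n+1]}g_0\le\sum_n\sup_{[n,n+1]}g_4$ reduces $g_0$ to $g_4$ via (C2). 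Likewise, on the event $\{\theta\le e^{-y}\}$ one has $e^y\theta\le1$ and hence $1-e^{-e^y\theta}\le e^y\theta$, so $g_3(y)\le\me\big[e^y\theta\,1_{\{\theta\le e^{-y}\}}\big]$, which is the quantity that I would actually estimate.

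The substantive step is to prove $\sum_{n}\sup_{[n,n+1]}g<\infty$ for $g=g_4$ (and, in parallel, for $g_2,g_3$), which I would do by decomposing the defining expectation according to the position of $\theta$ relative to the thresholds $e^{-n-1}$ and $e^{-n}$. For $g_4$: on $[n,n+1]$ the supremum of $x\mapsto xe^{-x}$ over $x\in[e^n\theta,e^{n+1}\theta]$ equals $e^{n+1}\theta e^{-e^{n+1}\theta}\le e^{n+1}\theta$ if $\theta\le e^{-n-1}$, equals $e^{-1}$ if $e^{-n-1}<\theta<e^{-n}$, and equals $e^{n}\theta e^{-e^{n}\theta}$ if $\theta\ge e^{-n}$. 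Interchanging sum and expectation, the first stratum contributes $\sum_{n\ge0}e^{n+1}\me[\theta\,1_{\{\theta\le e^{-n-1}\}}]=\me\big[\theta\sum_{n:\,e^{n+1}\le 1/\theta}e^{n+1}\big]\le\tfrac{e}{e-1}$ (geometric series); the second contributes $e^{-1}\sum_{n\ge0}\mmp\{e^{-n-1}<\theta<e^{-n}\}\le e^{-1}$ (disjoint subintervals of $(0,1]$); and the third equals $\me\big[\sum_{j\ge0}be^{j}e^{-be^{j}}\big]$, where $b=e^{n_0}\theta\in[1,e)$ with $n_0=\lceil\log(1/\theta)\rceil$, which is bounded by $\me\big[\sum_{j\ge0}e^{j}e^{-e^{j}}\big]<\infty$ because $s\mapsto se^{-s}$ is non-increasing on $[1,\infty)$. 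Adding the trivial bound $\sup_{[n,n+1]}g_4\le e^{n+1}$ for $n<0$ (from $g_4(y)\le e^y$) yields $\sum_{n\in\mathbb{Z}}\sup_{[n,n+1]}g_4<\infty$, so $g_4$ and hence $g_0$ are dRi on $\mr$. For $g_2,g_3$ on $[0,\infty)$ the argument runs identically: on $[n,n+1]$ one bounds $g_2(y)\le\me[e^{-e^{n}\theta}1_{\{\theta>e^{-n-1}\}}]$ and $g_3(y)\le\me[(1-e^{-e^{n+1}\theta})1_{\{\theta\le e^{-n}\}}]$, splits at the threshold $e^{-n-1}$ versus $e^{-n}$, and uses the same three ingredients (a "covering of $(0,1]$" sum for the boundary stratum; a convergent geometric series where $e^{n+1}\theta\le1$; and the convergent series $\sum_j e^{-e^j}$, respectively $\sum_j e^{j}e^{-e^{j}}$, where $e^{n}\theta\ge1$).

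I expect the main obstacle to be precisely this last "heavy" stratum $\{e^{n}\theta\ge1\}$: for an individual fixed value of $\theta$ the summand $e^{n}\theta e^{-e^{n}\theta}$ is not summable in $n$ unless one exploits that, for $n\ge n_0$, the argument $e^{n}\theta$ grows geometrically starting from a value in $[1,e)$, so that monotonicity of $se^{-s}$ on $[1,\infty)$ collapses the inner sum to the single universal convergent series $\sum_{j\ge0}e^{j}e^{-e^{j}}$, uniformly over the law of $\theta\in(0,1]$. Everything else — continuity of the $g_i$, the monotone-integrable bound for $g_1$, the pointwise domination $g_0\le g_4$, and the disjointness bound $\sum_n\mmp\{e^{-n-1}<\theta\le e^{-n}\}\le1$ — is routine.
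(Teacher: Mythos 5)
Your proposal is correct, but it follows a genuinely different route from the paper for four of the five functions. The paper dispatches $g_0$, $g_3$, $g_4$ with a single slick criterion (taken from the proof of Corollary 2.17 in Durrett--Liggett): a nonnegative Lebesgue-integrable function $g$ for which $y\mapsto e^{-y}g(y)$ is nonincreasing is directly Riemann integrable; integrability is checked by explicit Frullani-type computations ($\int_{\mr}g_0=\log 2$, $\int_{\mr}g_4=1$, a change of variables for $g_3$), and the monotonicity of $e^{-y}g_i(y)$ by fixing $z=\theta$ and inspecting the integrand. For $g_2$ the paper avoids estimates altogether by writing $g_2$ as the convolution of the dRi function $h(y)=\exp(-e^y)$ with the distribution function of $|\log\theta|$ and citing \c{C}inlar's result that such convolutions preserve direct Riemann integrability; only the treatment of $g_1$ (nonnegative, nondecreasing, integrable on $(-\infty,0]$) coincides with yours. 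You instead verify the blockwise criterion $\sum_n\sup_{[n,n+1]}g<\infty$ by hand, stratifying according to $\{\theta\le e^{-n-1}\}$, $\{e^{-n-1}<\theta<e^{-n}\}$, $\{\theta\ge e^{-n}\}$, using the monotonicity of $s\mapsto se^{-s}$ on $[1,\infty)$ to collapse the heavy stratum to the universal series $\sum_j e^j e^{-e^j}$, and reducing $g_0$ to $g_4$ via $e^{-a}(1-e^{-a})\le ae^{-a}$. Your estimates are sound (the geometric-series, disjointness and heavy-stratum bounds all check out, as does the a.e.\ continuity of $g_2,g_3$ needed for the block criterion), and the same template does handle $g_2$ and $g_3$ with the bounds you state, even though for each of these only two of your three strata actually occur. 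The trade-off: the paper's argument is shorter but leans on two external facts (the $e^{-y}g(y)$ monotonicity criterion and the convolution lemma), whereas yours is self-contained and elementary, treating $g_2,g_3,g_4$ uniformly at the cost of a longer computation.
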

\begin{proof}
Since the functions $g_i$, $i=0,4$ and $g_3$ are nonnegative it
suffices to check that they are Lebesgue integrable on $\mr$ and
$[0,\infty)$, respectively, and that the functions $e^{-y}g_i(y)$,
$i=0,3,4$ are nonincreasing (see, for instance, the proof of
Corollary 2.17 \cite{Durr}). The first property follows from the
equalities
\begin{eqnarray*}
\int_\mr g_0(y){\rm d}y&=&\int_0^\infty y^{-1}\big(\me
e^{-y\theta}-\me e^{-2y\theta}\big){\rm d}y\\&=&\me \int_0^\infty
y^{-1}\big(e^{-y\theta}-e^{-2y\theta}\big){\rm d}y=\log 2,
\end{eqnarray*}
\begin{equation*}
\int_\mr g_4(y){\rm d}y=\me \int_0^\infty \theta e^{-y\theta}{\rm
d}y=1
\end{equation*}
and the inequality
\begin{eqnarray*}
\int_0^\infty g_3(y){\rm d}y&=&\me \int_1^\infty
y^{-1}\big(1-\exp(-y\theta)\big)1_{\{\theta\leq y^{-1}\}}{\rm
d}y\\&=&\me \int_\theta^1 y^{-1}\big(1-e^{-y}\big){\rm d}y\leq
\int_0^1 y^{-1}\big(1-e^{-y}\big){\rm d}y<\infty,
\end{eqnarray*}
where the last chain of estimates is justified by the change of variable
and condition $\theta\in [0,1]$ a.s. Further, with $z\in (0,1]$
fixed, the function $y^{-1}(1-e^{-yz})e^{-yz}$ is nonincreasing on
$[0,\infty)$. Hence the function $e^{-y}g_0(y)$ is nonincreasing,
too. By the same reasoning, with $z\in (0,1]$ fixed, the functions
$y^{-1}(1-e^{-yz})$ and $1_{\{z\leq y^{-1}\}}$ are nonincreasing
on $(0,\infty)$, hence, so are their product and the function
$e^{-y}g_3(y)$. The monotonicity of $g_4$ is obvious.

Since $g_1$ is nonnegative and nondecreasing, it suffices to show
that it is Lebesgue integrable on $(-\infty,0]$:
$$\int_{-\infty}^0 g_1(y){\rm d}y=\int_0^1 y^{-1}\me\big(1- e^{-y\theta}\big){\rm
d}y\leq \int_0^1\me \theta {\rm d}y \in (0,1].$$

The  function $h(y):=\exp(-e^y)$ is positive and directly Riemann
integrable on $[0,\infty)$. Since $g_2$ is the convolution of $h$
and the distribution function of $|\log \theta|$, it is directly
Riemann integrable on $[0,\infty)$, by Proposition 2.16(d) in
\cite{cinlar}, p.~297.
\end{proof}
Denote by $\big(S_n^\ast\big)_{n\in\mn_0}$ a zero-delayed random
walk with independent increments distributed as a nonnegative
random variable $\xi^\ast$. Set
$$U^\ast(t)=\sum_{n\geq 0}\mmp\{S_n^\ast\leq t\}, \ \ t\in\mr.$$ This notation is used  in the next two assertions.
Although we think Lemma \ref{dri2} may have been known, we give
its complete proof as we have been unable to locate it in the
literature.
\begin{lemma}\label{dri2}
If  $f: \mr \to [0,\infty)$ is a directly Riemann
integrable function on $[0,\infty)$, then
$$\underset{t\to\infty}{\overline{\lim}}\,\int_{[0,\,t]}f(t-y){\rm d}U^\ast(y)<\infty.$$
If $f$ is directly Riemann integrable on $(-\infty,0]$, then
$$\underset{t\to\infty}{\overline{\lim}}\,\int_{[t,\,\infty)}f(t-y){\rm d}U^\ast(y)<\infty.$$
\end{lemma}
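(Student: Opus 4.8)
\textbf{Proof proposal for Lemma \ref{dri2}.}

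The plan is to reduce the statement to the classical fact that the renewal measure of any zero-delayed random walk puts uniformly bounded mass on unit intervals, i.e.\ $\sup_{t\geq 0}\big(U^\ast(t+1)-U^\ast(t)\big)=:C<\infty$. This is a standard consequence of the subadditivity $U^\ast(t+1)\leq U^\ast(1)\big(U^\ast(t+1)-U^\ast(t)$ being dominated along an arithmetic progression$\big)$; more simply, $U^\ast(t+1)-U^\ast(t)\leq U^\ast(1)$ for all $t\geq 0$ because every visit to $(t,t+1]$ can be matched, by strong Markov property at the first entrance into $(t,\infty)$, with a visit of a fresh copy of the walk to $[0,1]$. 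I would state this bound as the first step, citing a standard reference (e.g.\ \cite{Resnick} or \cite{cinlar}) rather than reproving it.

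Second, I would recall the characterization of direct Riemann integrability that is most convenient here: if $f\colon[0,\infty)\to[0,\infty)$ is directly Riemann integrable, then $\bar\sigma(h):=h\sum_{n\geq 0}\sup_{y\in[nh,(n+1)h]}f(y)<\infty$ for every (equivalently, some) $h>0$, and in particular $\sum_{n\geq 0}\overline{f}_n<\infty$ where $\overline{f}_n:=\sup_{y\in[n,n+1]}f(y)$. This is immediate from the definition via upper Darboux sums.

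Third, I would combine the two: partitioning $[0,t]$ into the intervals $(t-n-1,t-n]$, $n=0,1,\ldots$, one gets
\begin{equation*}
\int_{[0,\,t]}f(t-y)\,{\rm d}U^\ast(y)\ \leq\ \sum_{n\geq 0}\overline{f}_n\,\big(U^\ast(t-n)-U^\ast(t-n-1)\big)\ \leq\ C\sum_{n\geq 0}\overline{f}_n\ <\ \infty,
\end{equation*}
where the bound is uniform in $t$, so the $\overline{\lim}$ is finite. For the second assertion, $f$ directly Riemann integrable on $(-\infty,0]$ means $y\mapsto f(-y)$ is directly Riemann integrable on $[0,\infty)$; partitioning $[t,\infty)$ into $[t+n,t+n+1)$ and writing $\widetilde{f}_n:=\sup_{y\in[n,n+1]}f(-y)$ gives, in the same way,
\begin{equation*}
\int_{[t,\,\infty)}f(t-y)\,{\rm d}U^\ast(y)\ \leq\ \sum_{n\geq 0}\widetilde{f}_n\,\big(U^\ast(t+n+1)-U^\ast(t+n)\big)\ \leq\ C\sum_{n\geq 0}\widetilde{f}_n\ <\ \infty,
\end{equation*}
again uniformly in $t$. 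I do not expect any genuine obstacle here; the only point requiring a little care is a clean justification of the uniform unit-interval bound on $U^\ast$ without assuming nonlatticeness or finite mean of $\xi^\ast$ — but the first-entrance/strong-Markov argument sketched above works for an arbitrary nonnegative step distribution (with the harmless convention $U^\ast(s)=0$ for $s<0$), so this is routine.
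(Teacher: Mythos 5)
Your proof is correct and rests on the same two ingredients as the paper's: subadditivity of the renewal function (uniformly bounded mass on intervals of fixed length) combined with the summability of the upper Darboux sums coming from direct Riemann integrability. The only difference is cosmetic — the paper first disposes of the nonlattice case via the key renewal theorem and runs the subadditivity argument only in the $l$-lattice case with mesh $l$, whereas you apply the same bound with mesh $1$ uniformly (and also write out the second assertion, which the paper omits as analogous); no gap either way.
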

\begin{proof}
If the distribution of $\xi^\ast$ is non-lattice, an (even stronger)
assertion follows from the key renewal theorem. Suppose the distribution of
$\xi^\ast$ is $l$-lattice, $l>0$. We only treat the case of direct
Riemann integrability on $[0,\infty)$.

Since $$f(t)\leq \sum_{n\geq 1}\underset{(n-1)l\leq
s<nl}{\sup}\,f(s)1_{[(n-1)l,\, nl)}(t), \ \ t\geq 0,$$ we obtain
\begin{eqnarray*}
\int_{[0,\,t]}f(t-y){\rm d}U^\ast(y)&\leq& \sum_{n\geq
1}\underset{(n-1)l\leq
s<nl}{\sup}\,f(s)\big(U^\ast(t-nl)-U^\ast(t-(n-1)l)\big)\\&\leq&
U^\ast(l)\sum_{n\geq 1}\underset{(n-1)l\leq s< nl}{\sup}\,f(s)
\end{eqnarray*}
having utilized subadditivity of $U^\ast$ on $\mr$ for the last
inequality. It remains to observe that the series on the
right-hand side converges, since $f$ is directly Riemann
integrable.
\end{proof}

Lemma \ref{dri3} is used in the proof of Proposition \ref{mart}.
\begin{lemma}\label{dri3}
Suppose   $f: [0,\infty) \to [0,\infty)$ is a
nonincreasing function, $\lit \int_{[0,\,t]}f(y){\rm d}y=\infty$ and $0<\me
\xi^\ast<\infty$. For $0\leq a<b\leq 1$ the following relation
holds
$$\int_{[at,\,bt]}f(t-y){\rm d}U^\ast(y) \ \sim \ (\me \xi^\ast)^{-1}\int_{(1-b)t}^{(1-a)t}f(y){\rm d}y, \ \
t\to\infty.$$
\end{lemma}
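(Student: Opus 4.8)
The plan is to reduce the statement to the classical Blackwell/key renewal theorem by replacing the monotone function $f$ with simple (step) functions from above and below, and then controlling the resulting Riemann-type sums via the Blackwell theorem. First I would dispose of the non-lattice case, where the assertion is essentially a ``locally uniform'' version of the key renewal theorem: writing $\int_{[at,\,bt]}f(t-y)\,{\rm d}U^\ast(y)=\int_{(1-b)t}^{(1-a)t}f(s)\,{\rm d}U^\ast(t-s)$ (in the obvious Stieltjes sense), one splits $[(1-b)t,(1-a)t]$ into blocks of a fixed length $h>0$; on each block $f$ varies little relative to its average because $f$ is monotone, and the increments $U^\ast(\,\cdot+h)-U^\ast(\,\cdot\,)$ are asymptotically $h/\me\xi^\ast$ by Blackwell's theorem. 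Summing the block estimates and letting $h\downarrow 0$ after $t\to\infty$ yields the two-sided bound
$$(\me\xi^\ast)^{-1}\int_{(1-b)t}^{(1-a)t}f(y)\,{\rm d}y\,(1+o(1))\ \le\ \int_{[at,\,bt]}f(t-y)\,{\rm d}U^\ast(y)\ \le\ (\me\xi^\ast)^{-1}\int_{(1-b)t}^{(1-a)t}f(y)\,{\rm d}y\,(1+o(1)),$$
using $\lit\int_0^t f=\infty$ to absorb boundary terms of size $O(1)$ coming from a single block.

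For the $l$-lattice case I would argue directly with the lattice renewal theorem, which gives $\lit\big(U^\ast(s+l)-U^\ast(s)\big)=l/\me\xi^\ast$. Here the integral $\int_{[at,\,bt]}f(t-y)\,{\rm d}U^\ast(y)$ is a sum over the finitely many lattice points $y=kl$ in $[at,bt]$ of $f(t-kl)$ times the jump of $U^\ast$ at $kl$; using the monotonicity of $f$ to sandwich $f(t-kl)$ between averages of $f$ over adjacent lattice intervals, and then applying the lattice Blackwell theorem to each jump, one recovers the same asymptotics with $\int_{(1-b)t}^{(1-a)t}f(y)\,{\rm d}y$ — the Riemann sum $\sum_k l\, f(t-kl)$ being asymptotically the integral since $f$ is monotone and its integral diverges. (If the distribution of $|\log W|$ is lattice only a shift-invariant version of this is needed; the divergence of $\int_0^t f$ again makes any fixed phase discrepancy negligible.)

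The main obstacle is the uniformity of the comparison as the window $[(1-b)t,(1-a)t]$ slides off to infinity while simultaneously stretching: one must ensure that the $o(1)$ in Blackwell's theorem does not interact badly with the (possibly slowly) growing number of blocks and with the fact that $f$ may itself tend to zero. This is handled by the standard device of first estimating on the fixed-length blocks — where Blackwell gives genuine uniformity on compacts but we need it ``near $+\infty$'', so one invokes the uniform form $\sup_{s\ge T}|U^\ast(s+h)-U^\ast(s)-h/\me\xi^\ast|\to 0$ as $T\to\infty$ valid for fixed $h$ — and then using $\int_0^t f\to\infty$ to see that the cumulative error, which is $o(1)$ per unit length times the total length, is of smaller order than the main term $\int f$. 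Everything else is routine: monotone sandwiching of $f$, telescoping of $U^\ast$-increments, and the interchange of the limits $t\to\infty$ and $h\downarrow 0$.
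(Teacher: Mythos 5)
Your blocking-plus-Blackwell scheme is essentially the mechanism behind the result the paper actually invokes: the authors give no direct argument at all, but cite Sgibnev's renewal theorem for nonincreasing functions with divergent integral (which is the case $a=0$, $b=1$), say the general case ``runs the same path'', and add only the rescaling $f_l(t)=f(lt)$ reducing an $l$-lattice span to span $1$. The genuine gap in your write-up is the error bookkeeping. Every remainder — the partial blocks at the two ends of the window, the telescoping loss from the monotone sandwich, the lattice phase discrepancy — is declared negligible ``because $\lit\int_{[0,t]}f(y)\,{\rm d}y=\infty$'', but the quantity these errors must be small against is the \emph{window} integral $\int_{(1-b)t}^{(1-a)t}f(y)\,{\rm d}y$, not $\int_0^t f$. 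When $b<1$ these can have different orders: for $f(y)=(1+y)^{-1}$, $a=0$, $b=1/2$, one has $\int_0^tf\sim\log t\to\infty$ while $\int_{t/2}^{t}f\to\log 2$. So the divergence hypothesis buys nothing near the edges of the window. Concretely, your boundary and telescoping errors are of order $h\,f((1-b)t)$, and the double limit ($t\to\infty$, then $h\downarrow 0$) disposes of them only if $\limsup_{t} f((1-b)t)\big/\int_{(1-b)t}^{(1-a)t}f<\infty$. This holds automatically when $b=1$ (then the window integral is $\int_0^{(1-a)t}f\to\infty$ and the errors are $O(1)$), and it holds for the regularly varying $f=1-G$ to which the lemma is applied in \eqref{0025}, but it can fail for a general nonincreasing $f$ with divergent integral: one can place a jump of $f$ just inside the left edge of the window along a sequence $t_n$ so that $f((1-b)t_n)$ dominates $\int_{(1-b)t_n}^{(1-a)t_n}f$, and then (taking a renewal distribution whose measure $U^\ast$ retains atoms) the two sides of the asserted equivalence are of different orders. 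So the proof as written does not close for $b<1$; it needs either the restriction $b=1$ or an additional hypothesis of the above type.

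A smaller, fixable slip: the cumulative Blackwell error is not ``$o(1)$ per unit length times the total length'', i.e.\ $o(t)$ — that is useless against a window integral that may stay bounded. The correct bound weights each block by the value of $f$ there: the total error is at most $\sup_{s\ge at}\bigl|U^\ast(s+h)-U^\ast(s)-h/\me\xi^\ast\bigr|\cdot\sum_k f(s_k)$, and $\sum_k f(s_k)\le h^{-1}\int_{(1-b)t}^{(1-a)t}f+f((1-b)t)$, which for fixed $h$ is $o(1)$ times the main term plus the boundary term already discussed (when $a=0$ the finitely many blocks near $y=0$ carry the smallest values of $f$ and total renewal mass $O(1)$, so they join the boundary terms). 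With this correction, and in the case $b=1$ that the paper actually uses, your argument does establish the statement.
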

\begin{proof}
If the distribution of $\xi^\ast$ is nonlattice or $1$-lattice, the proof
runs the same path as that of Theorem 4 \cite{Sgib} which
investigates the case $a=0$, $b=1$. If the distribution of $\xi^\ast$ is
$l$-lattice, the distribution of $l^{-1}\xi^\ast$ is $1$-lattice. Hence,
putting $f_l(t):=f(lt)$ we obtain
\begin{eqnarray*}
\int_{[at,\,bt]}f(t-y){\rm
d}U^\ast(y)&=&\int_{[al^{-1}t,\,bl^{-1}t]}f_l(l^{-1}t-y){\rm
d}\sum_{n\geq 0}\mmp\{l^{-1}S_n^\ast\leq y\}\\&\sim& \ {l\over
\me\xi^\ast}\int_{(1-b)l^{-1}t}^{(1-a)l^{-1}t}f_l(y){\rm
d}y={1\over \me\xi^\ast}\int_{(1-b)t}^{(1-a)t} f(y){\rm d}y.
\end{eqnarray*}
\end{proof}

The statement and proof of Lemma \ref{weak_joint_conv} which is
used to demonstrate Theorem \ref{main} retain the notation
introduced in Section \ref{section_pp}.
\begin{lemma}\label{weak_joint_conv}
Let $X_t, t>0$ and $X$ be random elements taking values in $D$,
and $m_t$ and $m$ be random point processes taking values in $M_p
([0,\infty)\times(0,\infty])$. Weak convergence
\begin{equation}\label{jwc1}
\big(X_t,m_t\big) \ \Rightarrow \ \big(X,m\big),\;\;t\to\infty
\end{equation}
under the product topology on $D\times M_p
([0,\infty)\times(0,\infty])$ holds if, and only if, for each
$f\in C_K([0,\infty)\times(0,\infty])$,
\begin{equation}\label{jwc2}
\big(X_t,m_t(f)\big) \ \Rightarrow \ \big(X,
m(f)\big),\;\;t\to\infty
\end{equation}
under the product topology on $D \times [0,\infty)$.
\end{lemma}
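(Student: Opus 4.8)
The plan is to derive the ``only if'' direction directly from the continuous mapping theorem and the ``if'' direction from a tightness argument in which subsequential limits are identified through their Laplace functionals; throughout we use that $D$ and $M_p([0,\infty)\times(0,\infty])$ are Polish. For ``only if'', fix $f\in C_K([0,\infty)\times(0,\infty])$ and note that the map $\Pi_f(\omega,\mu):=(\omega,\mu(f))$ from $D\times M_p([0,\infty)\times(0,\infty])$ to $D\times[0,\infty)$ is continuous, since $\mu\mapsto\mu(f)$ is continuous in the vague topology by the definition of vague convergence and $\mu(f)\in[0,\infty)$ for $\mu\in M_p$, $f\geq0$; then \eqref{jwc2} follows from \eqref{jwc1} and the continuous mapping theorem.

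For ``if'', I would first establish tightness of $\{(X_t,m_t)\}_{t>0}$ on the product space. Projecting \eqref{jwc2} onto the first coordinate shows $\{X_t\}$ is tight in $D$. For each nonnegative $f\in C_K([0,\infty)\times(0,\infty])$ relation \eqref{jwc2} gives $m_t(f)\dod m(f)$, hence $\{m_t(f)\}_{t>0}$ is tight in $[0,\infty)$; by the standard tightness criterion for random point measures in the vague topology on the locally compact space $[0,\infty)\times(0,\infty]$ (see, e.g., \cite{ResnickBook}) this makes $\{m_t\}_{t>0}$ tight in $M_p([0,\infty)\times(0,\infty])$. Consequently $\{(X_t,m_t)\}$ is tight, and by Prokhorov's theorem every sequence tending to infinity has a subsequence $(t_n)$ along which $(X_{t_n},m_{t_n})\Rightarrow(\widetilde X,\widetilde m)$ for some random element of $D\times M_p([0,\infty)\times(0,\infty])$.

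It then remains to show $(\widetilde X,\widetilde m)\od(X,m)$. Applying $\Pi_f$ along $(t_n)$ gives $(X_{t_n},m_{t_n}(f))\Rightarrow(\widetilde X,\widetilde m(f))$, while \eqref{jwc2} gives $(X_{t_n},m_{t_n}(f))\Rightarrow(X,m(f))$; so the laws of $(\widetilde X,\widetilde m(f))$ and $(X,m(f))$ agree for every $f\in C_K([0,\infty)\times(0,\infty])$. Fix a bounded continuous $\phi:D\to[0,\infty)$ and let $\Lambda$, $\Lambda^\ast$ be the finite measures $A\mapsto\me\big(\phi(\widetilde X)1_{\{\widetilde m\in A\}}\big)$, $A\mapsto\me\big(\phi(X)1_{\{m\in A\}}\big)$ on $M_p([0,\infty)\times(0,\infty])$. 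Then for every nonnegative $f\in C_K([0,\infty)\times(0,\infty])$,
$$\int e^{-\mu(f)}\,\Lambda({\rm d}\mu)=\me\big(\phi(\widetilde X)e^{-\widetilde m(f)}\big)=\me\big(\phi(X)e^{-m(f)}\big)=\int e^{-\mu(f)}\,\Lambda^\ast({\rm d}\mu),$$
the middle equality being integration of the bounded continuous function $(\omega,s)\mapsto\phi(\omega)e^{-s}$ against the common law of $(\widetilde X,\widetilde m(f))$ and $(X,m(f))$. Since $\sum_i\lambda_if_i$ is again a nonnegative element of $C_K([0,\infty)\times(0,\infty])$ when the $f_i$ are and $\lambda_i\geq0$, this forces all joint Laplace transforms of $(\widetilde m(f_1),\dots,\widetilde m(f_k))$ under $\Lambda$ to match those under $\Lambda^\ast$, whence $\Lambda=\Lambda^\ast$ by the uniqueness theorem for Laplace functionals of finite random measures. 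Thus $\me\big(\phi(\widetilde X)\psi(\widetilde m)\big)=\me\big(\phi(X)\psi(m)\big)$ for all bounded continuous $\phi\geq0$ on $D$ and $\psi$ on $M_p([0,\infty)\times(0,\infty])$, hence by linearity for all bounded continuous $\phi$; therefore $(\widetilde X,\widetilde m)\od(X,m)$, and since every subsequential limit has this law, \eqref{jwc1} follows.

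The step I expect to be the main obstacle is this final identification: one must justify that the bivariate laws of $(\widetilde X,\widetilde m(f))$ for \emph{single} test functions $f$ pin down the joint law on $D\times M_p([0,\infty)\times(0,\infty])$. The mechanism above is the linearity of $\mu\mapsto\mu(f)$ in $f$, which upgrades the one--test--function Laplace functionals to all multivariate ones, after which the classical uniqueness of Laplace functionals of random measures closes the argument. A secondary, purely bookkeeping nuisance is the convention of the footnote on p.~\pageref{foot1} that in the second coordinate the roles of $0$ and $\infty$ are interchanged, so that the relevant compacta have the form $[0,a]\times[b,\infty]$.
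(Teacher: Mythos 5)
Your proposal is correct and takes essentially the same route as the paper: the ``only if'' half via the continuous mapping theorem applied to $(\omega,\mu)\mapsto(\omega,\mu(f))$, and the ``if'' half via tightness of each marginal (Prohorov on $D$, the point-process tightness criterion from \cite{ResnickBook} on $M_p$), tightness of the pair, and identification of all subsequential limits. The only difference is that you work out in detail, through the Laplace-functional argument with the auxiliary finite measures $\Lambda,\Lambda^\ast$, the final identification step that the paper disposes of in a single sentence (namely that all subsequential limits coincide in law because \eqref{jwc2} holds for every $f\in C_K([0,\infty)\times(0,\infty])$); this is added detail within the same argument, not a different approach.
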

\begin{proof}
Suppose \eqref{jwc1} holds. Then, for any fixed function $f\in
C_K([0,\infty)\times(0,\infty])$, the mapping $T_f:D \times
M_p([0,\infty)\times(0,\infty])\to D\times [0,\infty)$ defined by
$T_f(X,m)= \big(X,m(f)\big)$ is continuous in the product
topology, and \eqref{jwc2} follows by the continuous mapping
theorem.

Conversely, suppose \eqref{jwc2} holds. Then $X_t \Rightarrow X$
on $D$, and $m_t(f)\dod m(f)$, as $t\to\infty$. Consequently, the
families $\big(X_t\big)_{t\geq 0}$ and $\big(m_t(f)\big)_{t\geq
0}$ are tight on $D$ and $[0,\infty)$, respectively. Now
Prohorov's theorem ensures that these are relatively compact. By
Lemma 3.20 \cite{ResnickBook}, the family $\big(m_t\big)$ is tight
(hence relatively compact) on $M_p([0,\infty)\times(0,\infty])$.
Then the Cartesian product $\big(X_t, m_s\big)_{t,s\geq 0}$ is
relatively compact on $D \times M_p([0,\infty)\times(0,\infty])$
which implies that the family $\big(X_t,m_t\big)_{t>0}$ is tight
on $D\times M_p([0,\infty)\times(0,\infty])$. It remains to note
that all subsequential limits of the collection $\big(X_t,
m_t\big)_{t>0}$ are equal in distribution, for \eqref{jwc2} holds
for any function $f\in C_K([0,\infty)\times(0,\infty])$.
\end{proof}

\end{document}